\newcommand{\W}{\Phi}
\DeclareMathAlphabet{\mathpzc}{OT1}{pzc}{m}{it}
\DeclareMathOperator*{\WFh}{WF_h}
\DeclareMathOperator*{\loc}{loc}
\DeclareMathOperator*{\comp}{comp}
\DeclareMathOperator*{\Arg}{Arg}
\newtheorem{theorem}{Theorem}
\numberwithin{prop}{section}
\numberwithin{corol}{section}
\newtheorem{lemma}{Lemma}
\numberwithin{lemma}{section}
\numberwithin{conjecture}{section}
\theoremstyle{definition}
\numberwithin{defin}{section}
\numberwithin{figure}{section}
\renewcommand{\Re}{\mathop{\rm Re}\nolimits}
\renewcommand{\Im}{\mathop{\rm Im}\nolimits}
\newcommand{\pO}{{\partial\Omega}}
\newcommand{\bl}{\begin{flushleft}}
\newcommand{\el}{\end{flushleft}}
\newcommand{\br}{\begin{flushright}}
\newcommand{\ert}{\end{flushright}}
\newcommand{\bc}{\begin{center}}
\newcommand{\ec}{\end{center}}
\newcommand{\recip}[1]{\frac{1}{#1}}
\newcommand{\complex}{\mathbb{C}}
\newcommand{\numList}{\begin{enumerate}}
\newcommand{\enumList}{\end{enumerate}}
\newcommand{\e}{\epsilon}
\newcommand{\re}{\mathbb{R}}
\newcommand{\la}{\langle}
\newcommand{\ra}{\rangle}
\newcommand{\resd}{\Lambda(h)}
\newcommand{\asec}{\operatorname{arcsec}}
\newcommand{\mc}[1]{\mathcal{#1}}
\theoremstyle{remark}
\newtheorem{remark}{Remark}
\newcommand{\Deltad}[1]{\Delta_{V,#1}}
\renewcommand{\O}[1]{\mathpzc{O}_{#1}}
\renewcommand{\o}[1]{\mathpzc{o}_{#1}}
\title [Resonances for Thin Barriers on the Circle]{Resonances for Thin Barriers on the Circle}
\author[J. Galkowski]{Jeffrey Galkowski}
\address{Mathematics Department, Stanford University, Stanford, 
CA, USA, 94305}
\email{jeffrey.galkowski@stanford.edu}
\begin{document}
\begin{abstract}
We study high energy resonances for the operator $-\Deltad{\pO}:=-\Delta+\delta_{\partial\Omega}\otimes V $ when $V$ has strong frequency dependence. The operator $-\Deltad{\pO}$ is a Hamiltonian used to model both quantum corrals \cite{Aligia,Heller} and leaky quantum graphs \cite{Exner}. Since highly frequency dependent delta potentials are out of reach of the more general techniques in \cite{Galk,GS}, we study the special case where $\Omega=B(0,1)\subset \re^2$ and $V\equiv h^{-\alpha }V_0>0$ with $\alpha\leq 1$. Here $h^{-1}\sim \Re \lambda$ is the frequency. We give sharp bounds on the size of resonance free regions for $\alpha\leq 1$ and the location of bands of resonances when $5/6\leq \alpha\leq 1$. Finally, we give a lower bound on the number of resonances in logarithmic size strips: $-M\log \Re \lambda\leq \Im \lambda \leq 0$.  
\end{abstract}

\maketitle

\section{Introduction}
Scattering by potentials is used in mathematics and physics to study long term behavior of waves in many physical systems  (see for example \cite{Burke},\cite{ZwScat}, \cite{Lax}, and \cite{ZwAMS} the references therein). Examples include the study of acoustics in concert halls, scattering of gravitational waves by black holes and scattering in open microwave cavities. 

Recently, there has been interest in scattering by quantum corrals that are constructed using scanning tunneling microscopes \cite{Aligia, Heller,Crommie} and in scattering by leaky quantum graphs \cite{Exner}. One model used in the theoretical understanding of these systems is a delta function potential on the boundary of a domain $\Omega\subset \re^d$ (see for example \cite{crommie1995waves,Exner}). The papers \cite{Galk,GS} began the rigorous study of scattering by delta function potentials on hypersurfaces. 

In \cite[Theorem 1.4]{GS}, Smith and the author show that solutions to
\begin{equation}
\label{eqn:wave} 
(\partial_t^2-\Delta +\delta_\pO\otimes V)u=0
\end{equation} 
where $\delta_{\pO}$ is the Hausdorff $d-1$ measure on the hypersurface, $\pO\subset\re^d$ ($d$ odd) and $V:L^2(\pO)\to L^2(\pO)$, 
have, for any $K$ a compact subset of $\re^d$, expansions roughly of the form 
\begin{equation}
\label{eqn:expand}
u(t,x) \sim \sum_{\lambda\in \text{Res}}e^{-it\lambda}a_\lambda u_\lambda (x),\quad\quad x\in K\Subset \re^d
\end{equation}
where $\text{Res}$ is the (discrete) set of \emph{scattering resonances}. Thus, the real and (negative) imaginary parts of resonances correspond respectively to the frequency and decay rate of the associated resonant state, $u_\lambda(x).$ Here a \emph{resonant state at $\lambda$}  is a nonzero outgoing solution to 
$$ (-\Delta+\delta_{\pO}\otimes-\lambda^2)u=0.$$ 
The expression \eqref{eqn:expand} is similar to the expansion in terms of eigenvalues that one obtains for the solution of the wave equation on a compact manifold. Hence, for leaky systems, scattering resonances play the role of eigenvalues in the closed setting. Resonance expansions like \eqref{eqn:expand} appear in a wide variety of scattering problems (see for example \cite[Sections 3.2 ,4.6]{ZwScat} \cite{BuZw,TangZw} and the references therein).

As can be seen from \eqref{eqn:expand}, resonances close to the real axis give information about long term behavior of waves. Since the seminal work of Lax--Phillips \cite{Lax} and Vainberg \cite{Vain}, (asymptotically) resonance free regions near the real axis have been used to understand decay of waves. In particular, if there are no resonances, $\lambda$, with $|\Re \lambda|\geq M$ and $\Im \lambda > -\gamma$, then there are only finitely many $\lambda \in \text{Res}$ with $\Im \lambda >-\gamma$. Thus, an expansion of the form \eqref{eqn:expand} implies 
\begin{equation}
\label{eqn:expand2}u(t,x)\sim \sum_{\substack{\lambda \in \text{Res}\\\Im \lambda >-\gamma}}e^{-it\lambda}a_\lambda u_\lambda(x)+\O{}(e^{-t\gamma}),\quad\quad x\in K\Subset \re^d.
\end{equation}
That is, in any compact set, there is a (non-orthogonal) expansion of $u(t,x)$ into time harmonic pieces up to an exponentially decaying error. Moreover, if there are no resonances with $|\Re \lambda|\geq M$ and $\Im \lambda \geq -C\log |\Re \lambda|$, then the error term in \eqref{eqn:expand2} becomes smoother as $t\to \infty$ (see \cite{GaQV} and the references therein.)

While the spectral analysis of $-\Delta+V\otimes \delta_{\pO}$ below applies equally well to the analysis of the Schr\"odinger equation
$$(i\partial_t-\Deltad{\pO})u=0,$$
expansions of the form \eqref{eqn:expand2} generally do not hold. Instead, one must take initial data $u(0)$, $u_t(0)$ concentrated at frequency $\sim\lambda$. Then, under various assumptions on resonances, one obtains weaker versions of \eqref{eqn:expand} (see for example \cite{Bu,LocSmooth1,NakSteZw} \cite[Chapter 7]{ZwScat}).

\begin{figure}
\includegraphics[width=0.5\textwidth]{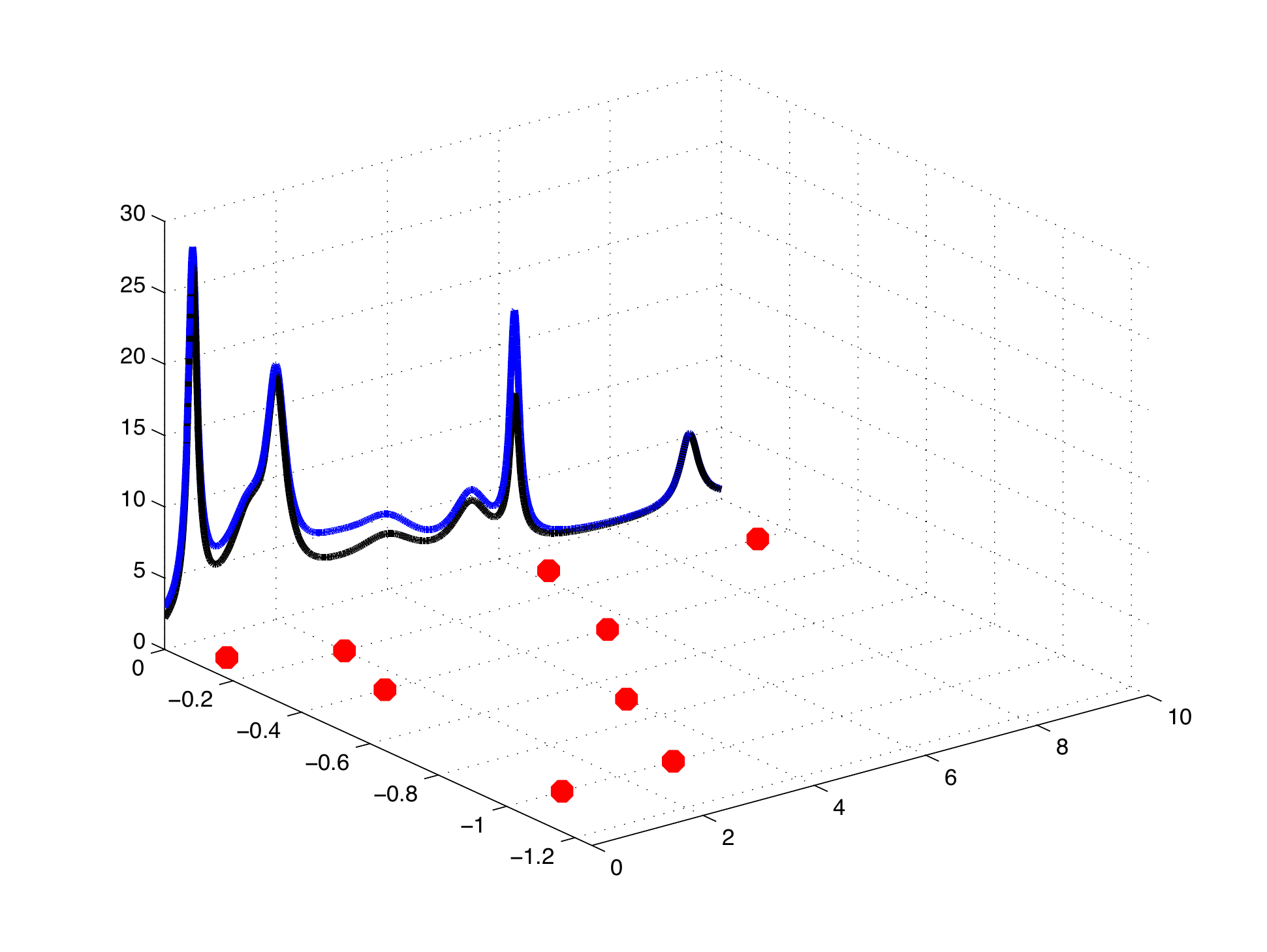}
\caption{We show the power spectrum recovered from waves after a long period in blue. The resonances associated with this power spectrum are shown in red.  A resonance close to the real axis causes a thin and narrow spike, while one further from the real axis causes a broader peak. Thus the bulk of the energy is contained where resonances are close to the real axis. Power spectra like those shown are often used in experiments to recover resonances.}
\end{figure}

Let $-\Deltad{\pO}$ be the unbounded operator
 $
-\Deltad{\pO}:=-\Delta +\delta_{\partial\Omega}\otimes V.$ 
 We assume that $\partial\Omega\subset \re^d$ is a smooth, bounded hypersurface. (For more general assumptions see \cite{Galk, GS}.) We take
$V$ a bounded operator on $L^2(\partial\Omega)$. For
$u\in H^1_{\loc}(\re^d)$, we define 
$(V\otimes\delta_{\partial\Omega})u:=(Vu|_{\partial\Omega})\delta_{\partial\Omega}$. (See Section \ref{sec:formalDefinition} for the formal definition of $-\Deltad{\pO}$.)

\begin{remark}Just as for $V\in L^\infty_{\comp}$, $-\Deltad{\pO}$ has $L^2$ spectrum bounded from below with essential spectrum equal to $[0,\infty)$. However, unlike for $V\in L^\infty_{\comp}$, $-\Delta_{\pO}$ may have embedded eigenvalues (see \cite[Sections 2.1, 7]{GS}).   
\end{remark}

The precise definition of a \emph{scattering resonance} for $-\Deltad{\pO}$ is a pole of the meromorphic continuation  of the resolvent 
$$
R_V(\lambda)=(-\Deltad{\pO}-\lambda^2)^{-1}\,,
$$
from $\Im \lambda \gg 1$. Because we are interested in asymptotically resonance free regions near the real axis, and in particular, $|\Re \lambda|\gg 1$, it is convenient to rescale $\lambda=z/h$ with $h\ll1$ and write 
$$R_V(z/h)=h^2(-h^2\Deltad{\pO}-z^2)^{-1}.$$
Thus, we study the poles of 
\begin{equation}
\label{eqn:rescaledOp}(-h^2\Deltad{\pO}-z^2)^{-1}=(-h^2\Delta +(h \delta_\pO\otimes hV)- z^2)^{-1}.\end{equation}

In typical physical systems such as quantum corrals considered in \cite{Heller} and concert halls, interactions between barriers and waves are frequency dependent. One natural model where this is the case is the quantum point interaction in one dimension. This object is understood using the operator on $L^2(\re)$
\begin{equation}
\label{eqn:quantumPoint}-h^2\Delta +V\otimes\delta(x/h)=-h^2\Delta+h^{-1}(h\delta(x)\otimes hV).
\end{equation}
This type of operator also appears when considering delta function potentials on domains of scale comparable to the frequency of interest
\begin{equation}
\label{eqn:growingDomain}-\Delta+\delta(x-h^{-1})+\delta(x+h^{-1}).
\end{equation}
In particular, rescaling with $hy=x$ results in the operator 
$$-h^2\Delta+\delta((y-1)/h)+\delta((y+1)/h)=-h^2\Delta+h\delta(y-1)+h\delta(y+1).$$
Both \eqref{eqn:quantumPoint} and \eqref{eqn:growingDomain} correspond to letting $V\sim h^{-1}$ in \eqref{eqn:rescaledOp}. Thus, the natural upper bound on $V$ is $\|V\|_{L^2\to L^2}\leq Ch^{-1}$. 

One additional way in which this type of operator appears is in the wave equation
$$(\partial_t^2-\Delta +i\delta_{\pO}\otimes (\la a,\partial_x\ra +a_0\partial_t))u=0.$$
If we formally take the Fourier transform in time, we arrive at 
$$[-\Delta +\lambda \delta_{\pO}\otimes (\la\lambda^{-1} a,\partial_x\ra-ia_0)-\lambda^2]\hat{u}=0$$
and, rescaling $\lambda=z/h$ gives
\begin{equation}\label{eqn:waveDelta}[-h^2\Delta+z(h\delta_{\pO}\otimes (\la z^{-1}a,h\partial_x\ra -ia_0))-z^2]\hat{u}=0.
\end{equation}

To understand how resonances behave for highly frequency dependent potentials, we consider a model potential which depends strongly on frequency $(\sim h^{-1})$. In particular, we consider $-\Deltad{\pO}$ when $\Omega=B(0,1)\subset \re^2$ and $V\equiv h^{-\alpha} V_0$ for $\alpha \leq 1$, and $V_0> 0$ is a constant independent of $h$. 

Some progress has been made toward understanding the distribution of resonances for delta potentials depending on frequency. In \cite{GS}, Smith and the author demonstrate the existence of a logarithmic resonance free region for a very general class of $\Omega$. The results imply the existence  of logarithmic resonance free regions in our case when $\alpha<2/3$. In \cite{Galk}, the present author gives sharp bounds on the size of the resonance free region when $V\in h^{-\alpha}C^\infty(\partial\Omega)$ with $ \alpha < 2/3$ and $\Omega$ a smooth strictly convex domain. Because $\alpha<2/3$, we think of the potentials considered in \cite{Galk} and \cite{GS} as depending mildly on frequency. 

However, the quantum point interaction \eqref{eqn:quantumPoint}, large domain \eqref{eqn:growingDomain}, and first order delta potential \eqref{eqn:waveDelta} correspond to a potential which depends strongly on frequency ($\alpha=1$). The range $2/3\leq \alpha\leq 1$ is out of reach using the techniques from \cite{Galk,GS} because of a complication in the microlocal analysis near trajectories tangent to the boundary of $\Omega$, \emph{glancing trajectories}.

Denote the set of rescaled resonances for $-\Deltad{\partial\Omega}$ by
\begin{equation}
\label{def:lambdaCircle}
\begin{gathered} \resd:=\{z\in [1-ch^{3/4},1+ch^{3/4}]+i[-Mh\log h^{-1},0]: z/h\text{ is a resonance of } -\Deltad{\partial\Omega}\}.
\end{gathered}
\end{equation}

Our first theorem proves the existence of resonance free regions for $\alpha \leq 1$ and bands of resonance free regions for $1\geq \alpha\geq 5/6$.
\begin{theorem}
\label{thm:resFreeCircle}
Let $\Omega=B(0,1)\subset \re^2$ and $V\equiv h^{-\alpha}V_0> 0$. Then for all $\e>0$, $M>0$ there exists $h_{\e,M}>0$ such that for $0<h<h_{\e,M}$,
\begin{equation*}
\Lambda(h)\subset\left\{\begin{aligned}\left\{-\Im z\geq \frac{1-\alpha}{2}h\log h^{-1} -\frac{h}{2}\log \frac{V_0}{2}-\e h\right\}\cup\left\{-\Im z\geq Mh\log h^{-1}\right\}&\quad\alpha<5/6\\[5pt]
\left\{|-\Im zh^{2/3-2\alpha}-C_{V_0,N}|<\e\right\}\cup\left\{-\Im z\geq Mh^{2\alpha-2/3}\right\}&\quad5/6\leq\alpha\leq 1\end{aligned}\right.
\end{equation*}
where 
$$C_{V_0,N}:=\frac{\sqrt[3]{2}}{8\pi^2V_0^2|A_-(-\zeta_N)^3Ai'(-\zeta_N)|}$$
and $-\zeta_N$ is the $N^\text{th}$ zero of the Airy function $Ai(s)$.
\end{theorem}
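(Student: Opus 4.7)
The plan is to exploit the rotational symmetry of $\Omega = B(0,1)$ to separate variables, reducing the resonance problem to a countable family of transcendental equations in Bessel and Hankel functions, and then to analyze each via Debye and Olver uniform asymptotics in three regimes distinguished by the ratio of angular momentum to radial frequency.

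In polar coordinates, expanding in the angular Fourier basis $\{e^{in\theta}\}_{n \in \ints}$ diagonalizes the rotationally invariant operator $-\Deltad{\pO}$. On the $n$-th mode, an outgoing solution of $(-\Deltad{\pO} - \lambda^2)u = 0$ must be a multiple of $J_{|n|}(\lambda r)$ for $r < 1$ (regularity at the origin) and of $H_{|n|}^{(1)}(\lambda r)$ for $r > 1$ (outgoing at infinity). Continuity at $r = 1$ together with the jump $[\partial_\nu u] = h^{-\alpha}V_0\, u(1)$ produced by the delta potential, combined with the Wronskian $J_n(\lambda)(H_n^{(1)})'(\lambda) - J_n'(\lambda)H_n^{(1)}(\lambda) = 2i/(\pi\lambda)$, yields the quantization condition
\begin{equation*}
F_n(z) := J_{|n|}(z/h)\,H_{|n|}^{(1)}(z/h) - \frac{2ih^\alpha}{\pi V_0} = 0.
\end{equation*}
Thus $\resd$ is the union over $n \in \ints$ of the zeros of $F_n$ in the given rectangle.

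The analysis of $F_n$ splits into three regimes. In the \emph{interior} regime $|n|h \leq (1 - h^{2/3-\e})\Re z$, Debye's asymptotic expansion gives
\begin{equation*}
J_n(\lambda)H_n^{(1)}(\lambda) = \frac{1 - ie^{2i\Phi_n(\lambda)}}{\pi\sqrt{\lambda^2 - n^2}} + O(\lambda^{-2}), \qquad \Phi_n(\lambda) := \sqrt{\lambda^2 - n^2} - n\arccos(n/\lambda),
\end{equation*}
and substituting into $F_n = 0$ yields $|e^{2i\Phi_n(z/h)}|^2 = 1 + 4h^{2(\alpha-1)}(1 - n^2h^2)/V_0^2 + O(h^{2-\alpha})$. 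Since $\partial_z\Phi_n(z/h) = \sqrt{1 - n^2h^2/z^2}/h$ is real and positive for real $z$, linearizing in $\Im z$ gives
\begin{equation*}
-\Im z = \frac{h}{4\sqrt{1 - n^2h^2}}\log\!\Bigl(1 + \frac{4(1 - n^2h^2)}{V_0^2 h^{2(1-\alpha)}}\Bigr) + O(h^{1+\e}),
\end{equation*}
whose minimum over integer $n$ with $|n|h < 1 - h^{2/3-\e}$ is attained at $n = 0$ and equals $\tfrac{1-\alpha}{2}h\log h^{-1} - \tfrac{h}{2}\log(V_0/2) + O(h)$, establishing the resonance-free region claimed for $\alpha < 5/6$. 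In the \emph{exterior} regime $|n|h \geq (1 + h^{2/3-\e})\Re z$, Debye-type exponential asymptotics show $J_n H_n^{(1)} \sim -i/(\pi\sqrt{n^2 - \lambda^2})$ -- purely imaginary and of opposite sign to $2ih^\alpha/(\pi V_0)$ -- so solving $F_n = 0$ requires a large complex shift of $z$ that places any solutions well below $-Mh\log h^{-1}$, outside $\resd$.

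The \emph{glancing} regime $|n|h \in [\Re z - h^{2/3-\e},\,\Re z + h^{2/3-\e}]$ is the main obstacle and is what produces the Airy band structure when $\alpha \geq 5/6$. Parametrizing the glancing offset by $\zeta := (\lambda - |n|)(2/\lambda)^{1/3}$, Olver's uniform Airy-type expansion gives $J_n(\lambda) = (2/\lambda)^{1/3}A_-(-\zeta)\operatorname{Ai}(-\zeta) + O(h^{4/3})$ together with an analogous formula for $H_n^{(1)}$, so
\begin{equation*}
J_n(\lambda)H_n^{(1)}(\lambda) = \Bigl(\tfrac{2}{\lambda}\Bigr)^{2/3}A_-(-\zeta)^2\operatorname{Ai}(-\zeta)\bigl[\operatorname{Ai}(-\zeta) - i\operatorname{Bi}(-\zeta)\bigr] + O(h^{4/3}),
\end{equation*}
which vanishes to leading order precisely at the Airy zeros $\zeta = \zeta_N$. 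Writing $\delta := \zeta - \zeta_N$ and using the linearization $\operatorname{Ai}(-\zeta) \approx -\operatorname{Ai}'(-\zeta_N)\delta$ together with the Airy Wronskian identity $\operatorname{Ai}'(-\zeta_N)\operatorname{Bi}(-\zeta_N) = -1/\pi$, the equation $F_n(z) = 0$ reduces to a quadratic in $\delta$ whose solution satisfies $\Re \delta \sim h^{\alpha-2/3}$ and $\Im\delta \sim h^{2\alpha-4/3}$. Relating this back to $z$ via $\delta = 2^{1/3}h^{-2/3}(z - z_{n,N}) + O(h^{1/3})$, where $z_{n,N}$ is the real reference point at which $\zeta = \zeta_N$, produces $-\Im z = C_{V_0,N}h^{2\alpha-2/3}(1 + O(\e))$; the cubic amplitude $A_-(-\zeta_N)^3$ in the stated constant arises from the Olver amplitude $A_-(-\zeta_N)^2$ combined with the Jacobian of the $z \mapsto \zeta$ change of variable. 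Away from the Airy zeros the leading term of $J_n H_n^{(1)}$ is bounded below by a constant times $h^{2/3}$, forcing any zeros of $F_n$ to satisfy $-\Im z \geq Mh^{2\alpha-2/3}$. The threshold $\alpha = 5/6$ is precisely the point at which the band depth $h^{2\alpha - 2/3}$ coincides with the interior Debye depth $h\log h^{-1}$, so for larger $\alpha$ the Airy bands emerge above the interior resonance-free region. The chief technical difficulty is propagating Olver's uniform remainder estimates through the transition region and across complex shifts of $z$ while preserving the $O(h^{2\alpha-2/3})$ band-width accuracy.
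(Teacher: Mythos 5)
Your high-level strategy (separation of variables on the disk, reduction to the Bessel--Hankel Wronskian equation, splitting into interior, exterior, and glancing regimes, uniform Airy asymptotics at glancing) is the same as the paper's, and the Debye analysis in the interior regime is correct, including the optimization over $n$ giving the infimum at $n=0$ and the $h^{2\alpha-2/3}$ vs.\ $h\log h^{-1}$ comparison that explains the $\alpha=5/6$ threshold. However, there is a genuine error in the glancing analysis. The uniform Olver expansion you quote, $J_n(\lambda) = (2/\lambda)^{1/3}A_-(-\zeta)\,Ai(-\zeta) + O(h^{4/3})$, is wrong: $J_n(nz)$ is asymptotic to a \emph{single} Airy factor, $J_n(nz) \sim \left(\tfrac{4\zeta}{1-z^2}\right)^{1/4} n^{-1/3}Ai(n^{2/3}\zeta)$, and $H_n^{(1)} = J_n + iY_n$ carries the companion solution $Ai - iBi \propto A_-$. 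Hence the correct leading term of the product is $J_n H_n^{(1)} \propto n^{-2/3}\,Ai(n^{2/3}\zeta)\,A_-(n^{2/3}\zeta)$, not the $A_-^2 Ai\,(Ai - iBi) \propto A_-^3 Ai$ you write. The cubic $A_-^3$ in the constant $C_{V_0,N}$ does \emph{not} come from any ``Olver amplitude $A_-^2$''; it arises in the second-order Newton step when one linearizes around the Airy zero $-\zeta_N$: the first-order shift is $\e_0 \sim \big(\Phi A_- Ai'\big)^{-1}$ and the imaginary part that determines the band depth is carried by $\e_1 \sim \e_0^2 A_-'/A_- \sim \big(\Phi^2 A_-^3 Ai'\big)^{-1}$ after using $\Im(e^{-5\pi i/6}A_-') \propto Ai'$. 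With your extra $A_-^2$ in the amplitude, a careful computation would produce the wrong power of $A_-$ in $C_{V_0,N}$; the fact that ``cubic amplitude $+$ Jacobian'' happens to sound plausible is coincidental.

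A second gap: you split into interior ($|n|h \leq (1-h^{2/3-\e})\Re z$), exterior, and glancing ($|n|h$ within $h^{2/3-\e}$ of $\Re z$), but the paper devotes an entire subsection to the intermediate regime $M\max(h^{2/5(3-2\alpha)},h^{2/3}) \lesssim |\Re\zeta| \lesssim h^\delta$, where the scaled Airy argument $h^{-2/3}\zeta$ is large but $\zeta$ itself is not $O(1)$. There one must use the asymptotics of $Ai(-\cdot)$, $A_-(-\cdot)$ for large argument and show that the resulting oscillatory quantization condition pushes any resonances deeper into the lower half plane than both $\frac{1-\alpha}{2}h\log h^{-1}$ and $C_{V_0,N}h^{2\alpha-2/3}$; this does not fall out of either your Debye or your near-zero Airy analysis and needs to be supplied. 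Your closing sentence flags the ``transition region'' as the main technical difficulty, but a proof needs a concrete argument there, not just an acknowledgement.
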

\noindent See Figure \ref{f:thmPic} for a pictorial representation of the results of Theorem \ref{thm:resFreeCircle}.

The next theorem shows that Theorem \ref{thm:resFreeCircle} is optimal.
\begin{theorem}
\label{thm:existenceCircle}
For all $N>0$, there exists $h_0>0$ such that for $h<h_0$, there exist $z(h)\in \Lambda$ with 
$$-\Im z(h)= \begin{cases}\frac{1-\alpha}{2}h\log h^{-1}-\frac{h}{2}\log \frac{V_0}{2}+\O{}(h^{7/4})&\alpha < 1\\
\frac{h}{4}\log\left(1+\frac{4}{V_0^2}\right)+\O{}(h^{7/4})&\alpha =1\\
C_{V_0,N}h^{2\alpha-2/3}+\O{}(h^{3\alpha-4/3})&2/3<\alpha \leq 1\end{cases}
$$
\end{theorem}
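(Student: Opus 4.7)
The plan is to exploit the rotational symmetry of $B(0,1)$ to reduce the resonance problem to a one-dimensional equation in each angular Fourier mode. On the subspace spanned by $e^{in\theta}$, an outgoing solution of $(-\Delta+\delta_\pO\otimes h^{-\alpha}V_0-\lambda^2)u=0$ has radial part proportional to $J_{|n|}(\lambda r)$ inside $B(0,1)$ and to $H_{|n|}^{(1)}(\lambda r)$ outside. Continuity at $r=1$, together with the jump condition $\partial_r u(1^+)-\partial_r u(1^-)=h^{-\alpha}V_0\, u(1)$ and the Bessel Wronskian $J_n H_n^{(1)\prime}-J_n' H_n^{(1)}=2i/(\pi\lambda)$, collapses the resonance condition to
\[
J_{|n|}(\lambda)\,H_{|n|}^{(1)}(\lambda)=\frac{2i\,h^\alpha}{\pi V_0},\qquad \lambda=z/h.
\]
The task is then to exhibit, for each $N$, an integer $n$ together with a complex $z\in\Lambda(h)$ satisfying this equation with the prescribed $-\Im z$. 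I would take $n=0$ for the non-glancing regime $\alpha\leq 1$, and for the glancing regime $2/3<\alpha\leq 1$ the integer $n$ closest to $h^{-1}+\zeta_N h^{-1/3}/2^{1/3}$.

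For the non-glancing regime, plug the classical expansion $J_0(\lambda)H_0^{(1)}(\lambda)=\tfrac{1}{\pi\lambda}(1-i\,e^{2i\lambda})+\O{}(\lambda^{-2})$ into the resonance equation to obtain, modulo an $\O{}(h^2)$ remainder, the transcendental equation
\[
e^{2iz/h}=-\frac{2zh^{\alpha-1}}{V_0}-i.
\]
For $\alpha<1$ the right-hand side has modulus $\sim h^{\alpha-1}$ and argument $\pi+\O{}(h^{1-\alpha})$; equating moduli forces $\Im z<0$ and yields $-\Im z=\tfrac{1-\alpha}{2}h\log h^{-1}-\tfrac{h}{2}\log(V_0/2)$ at leading order, while the $2\pi$-branch of the complex logarithm parametrizes the resulting family by an integer $N$. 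For $\alpha=1$ the right-hand side is $-2z/V_0-i=\O{}(1)$, so the logarithm is taken directly, giving $-\Im z=\tfrac{h}{4}\log(1+4/V_0^2)$. In both sub-cases Rouché's theorem on a disc of suitable radius around the approximate root promotes the formal solution to a genuine zero of $\lambda\mapsto J_0(\lambda)H_0^{(1)}(\lambda)-2ih^\alpha/(\pi V_0)$, producing the advertised $\O{}(h^{7/4})$ error.

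For the glancing regime, introduce Olver's variable $\tau=2^{1/3}n^{-1/3}(n-\lambda)$. The uniform Airy-type expansion gives $J_n(\lambda)\sim 2^{1/3}n^{-1/3}\mathrm{Ai}(\tau)$ and $H_n^{(1)}(\lambda)\sim 2^{1/3}n^{-1/3}A_-(\tau)$, with $A_-$ the paper's normalization of the Airy combination proportional to $\mathrm{Ai}-i\mathrm{Bi}$. The resonance condition reduces, modulo smaller terms, to
\[
\mathrm{Ai}(\tau)\,A_-(\tau)=\frac{2^{1/3}\,i\,h^{\alpha-2/3}}{\pi V_0}.
\]
At $\tau_0=-\zeta_N$ one has $\mathrm{Ai}(\tau_0)=0$ and $\mathrm{Ai}''(\tau_0)=\tau_0\mathrm{Ai}(\tau_0)=0$, so Taylor expansion gives $\mathrm{Ai}'(\tau_0)A_-(\tau_0)(\tau-\tau_0)+\mathrm{Ai}'(\tau_0)A_-'(\tau_0)(\tau-\tau_0)^2+\O{}((\tau-\tau_0)^3)$. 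Since $\mathrm{Ai}'(\tau_0)A_-(\tau_0)$ is purely imaginary, the leading solution $(\tau-\tau_0)_{\mathrm{leading}}$ is \emph{real}, and only $\Re z$ moves at first order. The imaginary part of $\tau-\tau_0$ enters only at quadratic order, through the imaginary part of $A_-'(\tau_0)$; an explicit computation gives $\Im(\tau-\tau_0)\propto h^{2\alpha-4/3}/(\mathrm{Ai}'(\tau_0)|A_-(\tau_0)|^2)$. Inverting $\tau\leftrightarrow\lambda$ contributes a factor $n^{1/3}\sim h^{-1/3}$, and multiplying by $h$ produces $-\Im z=C_{V_0,N}h^{2\alpha-2/3}+\O{}(h^{3\alpha-4/3})$ with the constant in the statement, once the normalization of $A_-$ is matched. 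A final Rouché argument on a disc of radius $\sim h^{2\alpha-2/3}$ upgrades the formal solution to a true resonance.

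The main obstacle is the glancing analysis. Because the leading-order correction to $\tau$ is purely real, the desired $-\Im z$ is a genuinely second-order effect, so one must expand $\mathrm{Ai}(\tau)A_-(\tau)$ to quadratic order and track the resulting imaginary contribution carefully through the $\tau\to\lambda\to z$ substitutions, while simultaneously verifying that the next-to-leading term in Olver's expansion (relative size $n^{-4/3}=h^{4/3}$) is subleading to this second-order Taylor correction. The remainder $\O{}(h^{3\alpha-4/3})$ then emerges from the cubic Taylor term after the substitutions, and the Rouché radius must be chosen both to enclose the leading shift and to remain inside the domain of validity of the uniform Airy asymptotics. By contrast the non-glancing case is routine: the $\O{}(\lambda^{-2})=\O{}(h^2)$ Bessel remainder is far smaller than the leading $\sim h\log h^{-1}$ imaginary shift, and the Rouché estimate closes directly.
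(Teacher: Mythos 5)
Your proposal follows essentially the same route as the paper: reduce via rotational symmetry and the Bessel Wronskian to the scalar condition $J_n(\lambda) H_n^{(1)}(\lambda) = 2i h^{\alpha}/(\pi V_0)$; for non-glancing modes use large-argument Bessel asymptotics to convert it into a transcendental equation for $e^{2iz/h}$; for glancing modes use the uniform Airy (Olver) asymptotics, expand around an Airy zero $-\zeta_N$, note that the first-order correction to the Airy variable is real because $\mathrm{Ai}(-\zeta_N)=\mathrm{Ai}''(-\zeta_N)=0$, and extract $\Im z$ from the second-order Taylor term. This is exactly the paper's $\e_0,\e_1$ construction, and the Rouch\'e step you invoke plays the same role as the paper's contraction-mapping version of Newton's method (Lemma~\ref{lem:Newton}).

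Two points in your write-up would need repair before the constant $C_{V_0,N}$ comes out correctly. First, your glancing equation $\mathrm{Ai}(\tau) A_-(\tau) = 2^{1/3} i\, h^{\alpha-2/3}/(\pi V_0)$ has dropped the phase $2e^{-\pi i/3}$ that sits in the $H_n^{(1)}$ uniform asymptotics \eqref{eqn:deltaMainAsymptotics}; combined with the $1/(2i)$ from the Wronskian this contributes a factor $e^{-5\pi i/6}$ on the left, so the correct right-hand side carries argument $5\pi/6$, not $\pi/2$. Correspondingly, the claim ``$\mathrm{Ai}'(\tau_0) A_-(\tau_0)$ is purely imaginary'' is false: from \eqref{eqn:ImAMinus} one has $\Im\bigl(e^{-5\pi i/6}A_-(-\zeta_N)\bigr)=0$, so $A_-(-\zeta_N)$ has argument $5\pi/6$ or $-\pi/6$ and $\mathrm{Ai}'(-\zeta_N) A_-(-\zeta_N)$ has argument $5\pi/6$ or $-\pi/6$, not $\pi/2$. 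Your two slips happen to cancel in the conclusion that $(\tau-\tau_0)$ is real at first order, but the second-order computation — the one that produces $\Im z$ and hence $C_{V_0,N}$ — really does require tracking $e^{-5\pi i/6}A_-'(-\zeta_N)$ (whose imaginary part is $-\mathrm{Ai}'(-\zeta_N)/2$), and is sensitive to the resulting phase. Second, you asserted the $\O{}(h^{7/4})$ error in the non-glancing regime without identifying its source; it does not come from the $\O{}(\lambda^{-2})$ Bessel remainder (which is far smaller), but rather from evaluating the $h^{1-\alpha}z$-dependent logarithm on the $\Re z$ window $[1-Ch^{3/4},\,1+Ch^{3/4}]$ in the definition \eqref{def:lambdaCircle} of $\Lambda(h)$, exactly as remarked in the paper after \eqref{eqn:normal}.
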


\begin{figure}
\includegraphics{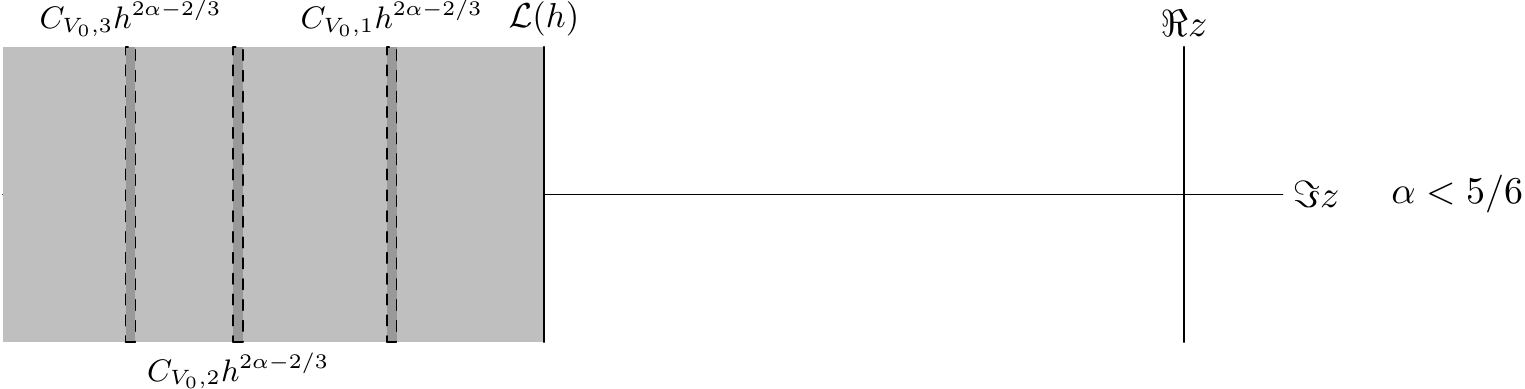}\\[20pt]
\includegraphics{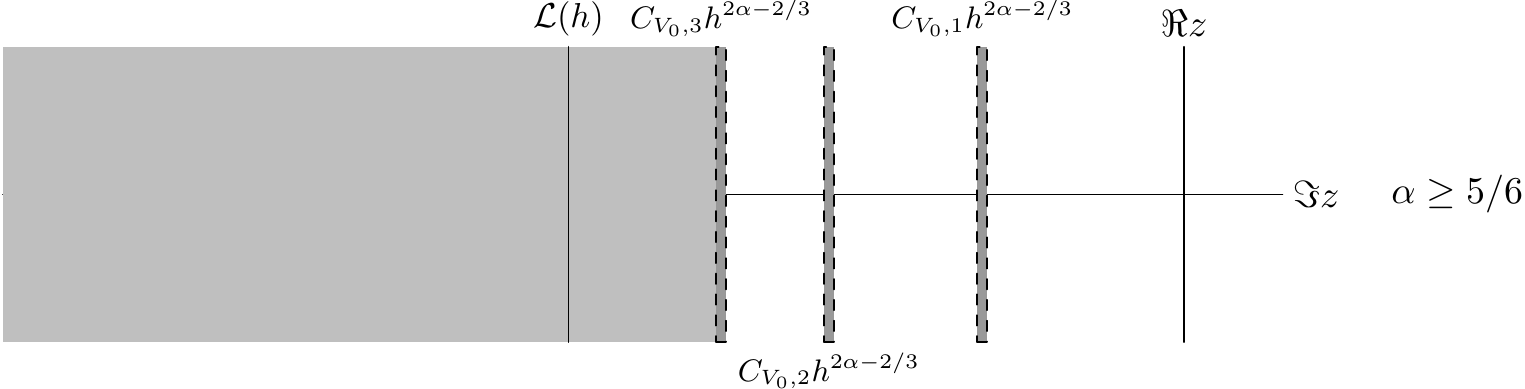}
\caption{\label{f:thmPic} This figure is a schematic of the results of Theorems \ref{thm:resFreeCircle} and \ref{thm:existenceCircle} for $\alpha<5/6$ on the top and $\alpha\geq 5/6$ on the bottom. Resonances lie in either the dark gray bands or the light gray shaded region, but not in the white regions. Here, $\mc{L}(h):=\frac{1-\alpha}{2}h\log h^{-1}-\frac{h}{2}\log \frac{V_0}{2}.$ Note that we show only three of the bands given by Theorem \ref{thm:resFreeCircle}.  See also Figures \ref{fig:circleRes}, \ref{fig:numericalResonanes}, \ref{fig:numRes2}, and \ref{fig:resBands} for numerically computed resonances.}
\end{figure}

The proof of Theorems \ref{thm:resFreeCircle} and \ref{thm:existenceCircle} show that when $\alpha<5/6$ the resonances closest to the real axis come from modes concentrating microlocally away from glancing trajectories, while those for $\alpha\geq 5/6$ come from modes concentrating near glancing. Thus, the theorems show that glancing modes decay slower than non-glancing modes for $\alpha\geq 5/6$ while the opposite is true for $\alpha<5/6$ and gives a quantitative rate of decay for each type of mode.

\begin{remark}
When $B(0,1)$ is replaced by $B(0,R)$ we can rescale to find that the resonance free region for $\Omega=B(0,R)$ and $\alpha\geq 5/6$ is given by
$-\Im z\geq
(C_{R^{2/3} V_0}-\e)h^{2\alpha -2/3}.$
Hence the imaginary part of resonances from glancing modes scale as $\kappa^{4/3}$ where $\kappa$ is the curvature.
\end{remark}

\begin{figure}
\centering
\includegraphics[width=4.5in]{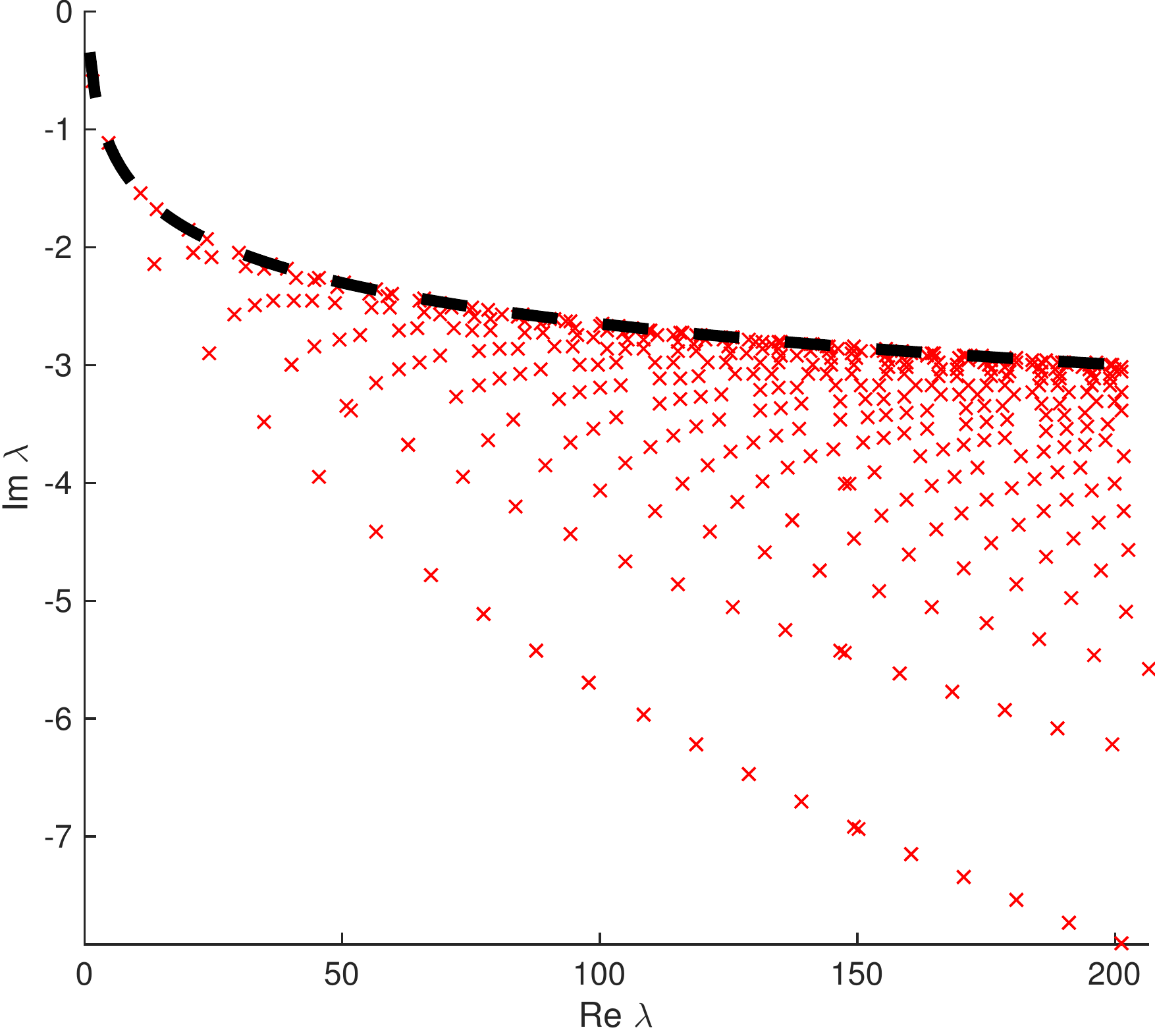}
\caption[Resonances computed for a circle]{\label{fig:circleRes}When $\Omega=B(0,1)\subset \re^2$, the boundary values of resonance states can be expanded in a Fourier series $\sum a_n e^{inx}.$ We show the resonances for $V\equiv 1$ corresponding roughly to $n\leq 200$ and $\lambda \leq 200$. The dashed line shows the bound given by Theorem \ref{thm:resFreeCircle}.}
\end{figure}

We also give a lower bound on the number of resonances. \begin{theorem}
\label{thm:lowerBound}
For $M$ large enough, there exists $c>0$ such that 
$$\#\{z\in [1-\e,1+\e]+i[-Mh\log h^{-1},0]:z/h\text{ is a resonance of } -\Deltad{\pO}\}\geq ch^{-2}.$$
\end{theorem}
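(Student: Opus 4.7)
The plan is to exploit separation of variables. Writing a resonant state as $u=\sum_n a_n(r)e^{in\theta}$ with $a_n(r)=A_nJ_n(\lambda r)$ inside the disk and $B_n H_n^{(1)}(\lambda r)$ outside (the outgoing condition), continuity at $r=1$, the jump condition $[\partial_r u]_{r=1}=V_0 h^{-\alpha}u(1)$ imposed by the $\delta$-potential, and the Bessel Wronskian $J_n(H_n^{(1)})'-J_n'H_n^{(1)}=2i/(\pi\lambda)$ reduce the resonance problem to: $\lambda=z/h$ is a resonance if and only if there exists $n\in\mathbb{Z}$ with
\[ F_n(\lambda):=J_n(\lambda)H_n^{(1)}(\lambda)-\frac{2ih^\alpha}{\pi V_0}=0. \]
Thus it suffices to lower bound $\sum_{n\in\mathbb Z}\#\{\text{zeros of }F_n\text{ in }R_h\}$, where $R_h:=[(1-\epsilon)/h,(1+\epsilon)/h]+i[-M\log h^{-1},0]$.

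I would restrict attention to the interior modes $|n|\leq(1-\delta)/h$, which avoid the glancing region $|n|\approx 1/h$. In this range the Debye expansion gives $H_n^{(1)}(\lambda)\sim\sqrt{2/(\pi\sqrt{\lambda^2-n^2})}\,e^{i\Phi_n(\lambda)}$ with phase $\Phi_n(\lambda)=\sqrt{\lambda^2-n^2}-n\arccos(n/\lambda)-\pi/4$, uniformly in $n$ and $\lambda\in R_h$. For $\Im\lambda<0$, $(H_n^{(1)})^2$ dominates the bounded term $H_n^{(1)}H_n^{(2)}$ in $2J_nH_n^{(1)}=(H_n^{(1)})^2+H_n^{(1)}H_n^{(2)}$, so $F_n=0$ reduces, after taking logarithms, to a quantization condition of the form
\[ 2\Phi_n(\lambda)=-i\log\!\left(\frac{2h^{\alpha-1}\sqrt{1-(nh)^2}}{V_0}\right)+(2k+1)\tfrac{\pi}{2},\qquad k\in\mathbb{Z}. \]
Separating real and imaginary parts and using $\Phi_n'(\lambda)=\cos\alpha_n$ with $\sin\alpha_n=n/\lambda$ gives one approximate resonance per integer $k$, located at
\[ \Re\lambda\approx\frac{(2k+1)\pi+2n\alpha_n}{2\cos\alpha_n},\qquad -\Im\lambda\approx\frac{1-\alpha}{2\cos\alpha_n}\log h^{-1}+O(1). \]

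I would then apply Rouch\'e's theorem on small circles about each approximate solution to promote it to an exact zero of $F_n$, using that the Debye remainder contributes $O(h)$ while $|F_n|$ grows linearly away from its approximate zero. Taking $M$ large enough that $(1-\alpha)/(2\cos\alpha_n)\leq M$ for every $n$ with $|n|\leq(1-\delta)/h$ (which holds once $M\geq(1-\alpha)/(2\sqrt{2\delta-\delta^2})$) places all the produced resonances in $R_h$. For each admissible $n$ the number of $k$'s giving $\Re\lambda\in[(1-\epsilon)/h,(1+\epsilon)/h]$ is $(2\epsilon\cos\alpha_n)/(\pi h)+O(1)$, and a Riemann sum yields
\[ \sum_{|n|\leq(1-\delta)/h}\frac{2\epsilon\cos\alpha_n}{\pi h}\;\sim\;\frac{2\epsilon}{\pi h^2}\int_{-(1-\delta)}^{1-\delta}\sqrt{1-t^2}\,dt\;\geq\;c\,h^{-2} \]
for some $c=c(\epsilon,\delta)>0$, giving the desired bound.

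The main obstacle is the Rouch\'e step: obtaining uniform-in-$n$ quantitative lower bounds on $|F_n|$ on circles around the approximate zeros, with Debye remainder estimates controlled uniformly across the admissible range. Confining attention to $|n|\leq(1-\delta)/h$ bypasses the more delicate Airy-type asymptotics near glancing trajectories at the cost of only a constant factor, which is harmless for a lower bound.
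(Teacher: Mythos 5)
Your approach is correct and yields the result, but it takes a genuinely cleaner route than the paper's. Both arguments start from the same separation--of--variables reduction to the countable family of scalar equations $J_n(\lambda)H_n^{(1)}(\lambda)=2ih^{\alpha}/(\pi V_0)$. The paper, however, constructs its resonances from modes at distance $h^{2/3-2\delta/3}$ from glancing (with $\delta>(3\alpha-2)/4$), passing all asymptotics through Olver's Langer variable $\zeta$ and the large--argument Airy expansions, and first counts within the thin window $\Lambda(h)$ of width $\sim h^{3/4}$, obtaining $\gtrsim h^{-5/4}$ there and then implicitly tiling $[1-\e,1+\e]$ by $\sim h^{-3/4}$ such windows to reach $h^{-2}$. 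You instead confine attention to $|n|\le(1-\delta)/h$, i.e.\ a fixed angular distance from glancing, so that the elementary Debye/WKB asymptotics for $J_nH_n^{(1)}$ apply with a uniformly $O(h/\cos\alpha_n)$ relative remainder; you then count directly over the order--$1$ window, obtaining $\sim \e\cos\alpha_n/(\pi h)$ longitudinal quantum numbers $k$ for each of the $\sim h^{-1}$ admissible $n$, and sum. This trades the paper's near--glancing Airy machinery (which it needs anyway for Theorem~\ref{thm:existenceCircle}) for a simpler, uniformly controllable Rouch\'e step away from glancing --- a good trade when all one wants is a lower bound. Two small points to secure: (i) the Debye regime requires $n/\lambda\le 1-\mathrm{const}$ uniformly over $\Re\lambda\in[(1-\e)/h,(1+\e)/h]$, so one should insist $\delta>\e$ (or shrink the frequency window), which you left implicit; (ii) there is a sign convention to check in the explicit formula for $\Re\lambda$ --- since $\arccos(n/\lambda)=\pi/2-\alpha_n$, the $\pm 2n\alpha_n$ sign flips relative to what you wrote --- but this does not affect the $k$--spacing $\pi/\cos\alpha_n$, nor the Riemann sum, nor the final $c h^{-2}$ bound.
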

\begin{remark}We have an upper bound of the form $Ch^{-2}$  by  \cite{SjoDist}, \cite{Vod1}, \cite{Vod2}, and \cite{Vod3} together with \cite[Lemma 7.1]{GS}.\end{remark}

\begin{figure}
\centering
\begin{subfigure}[b]{.45\textwidth}
\includegraphics[width=\textwidth]{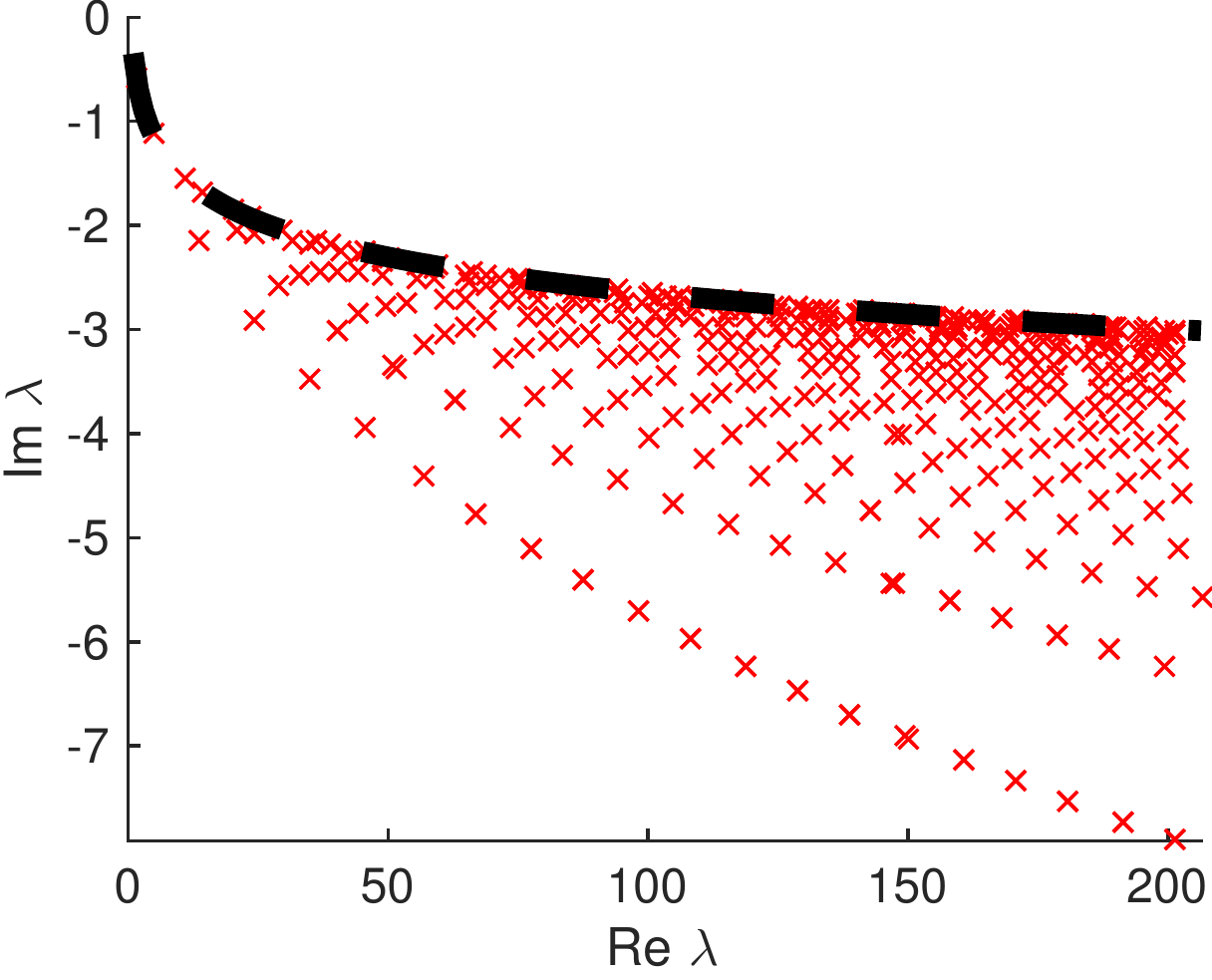}
\caption{$\alpha=0$}
\end{subfigure}
\begin{subfigure}[b]{.45\textwidth}
\includegraphics[width=\textwidth]{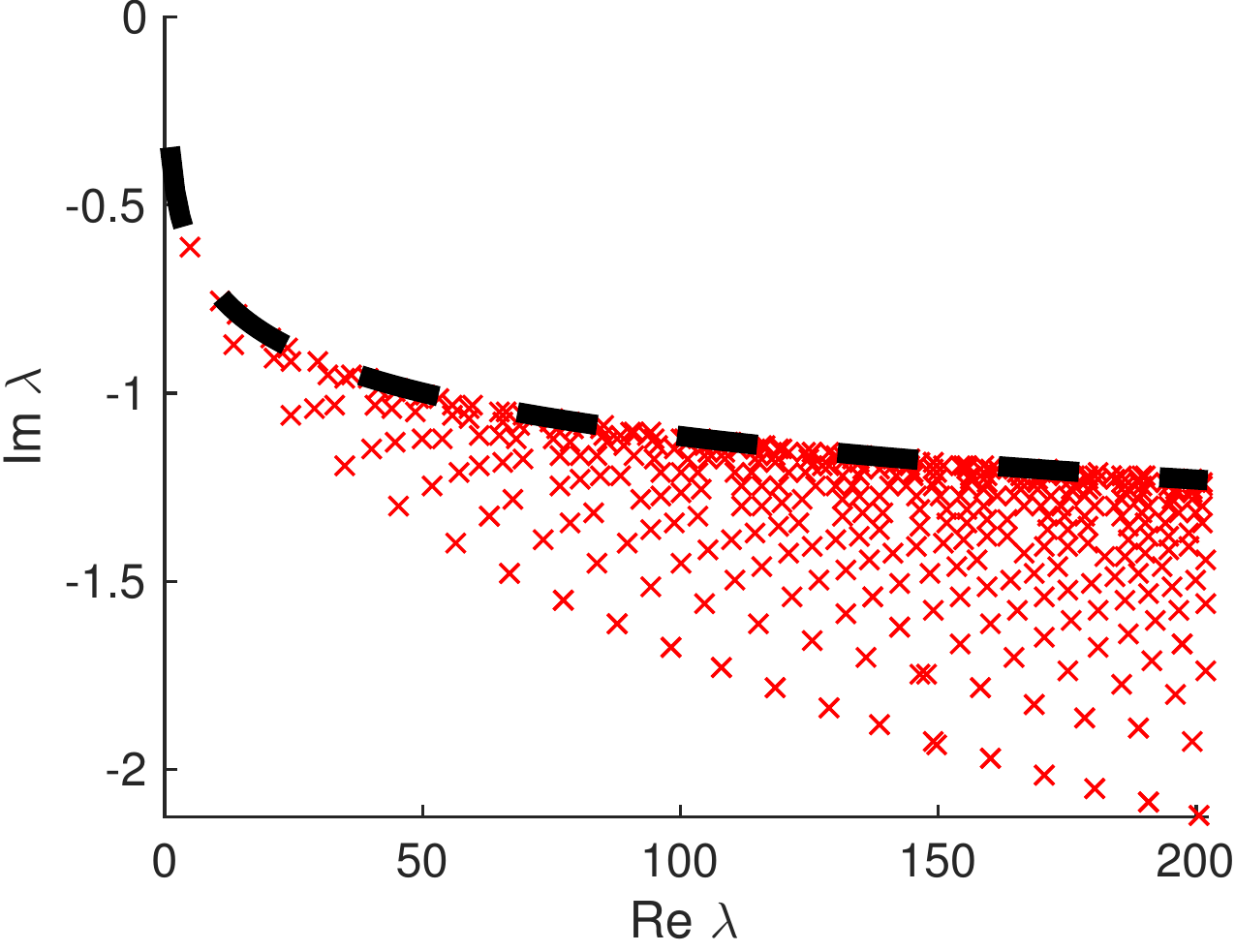}
\caption{$\alpha=\frac{2}{3}$}
\end{subfigure}

\begin{subfigure}[b]{.45\textwidth}
\includegraphics[width=\textwidth]{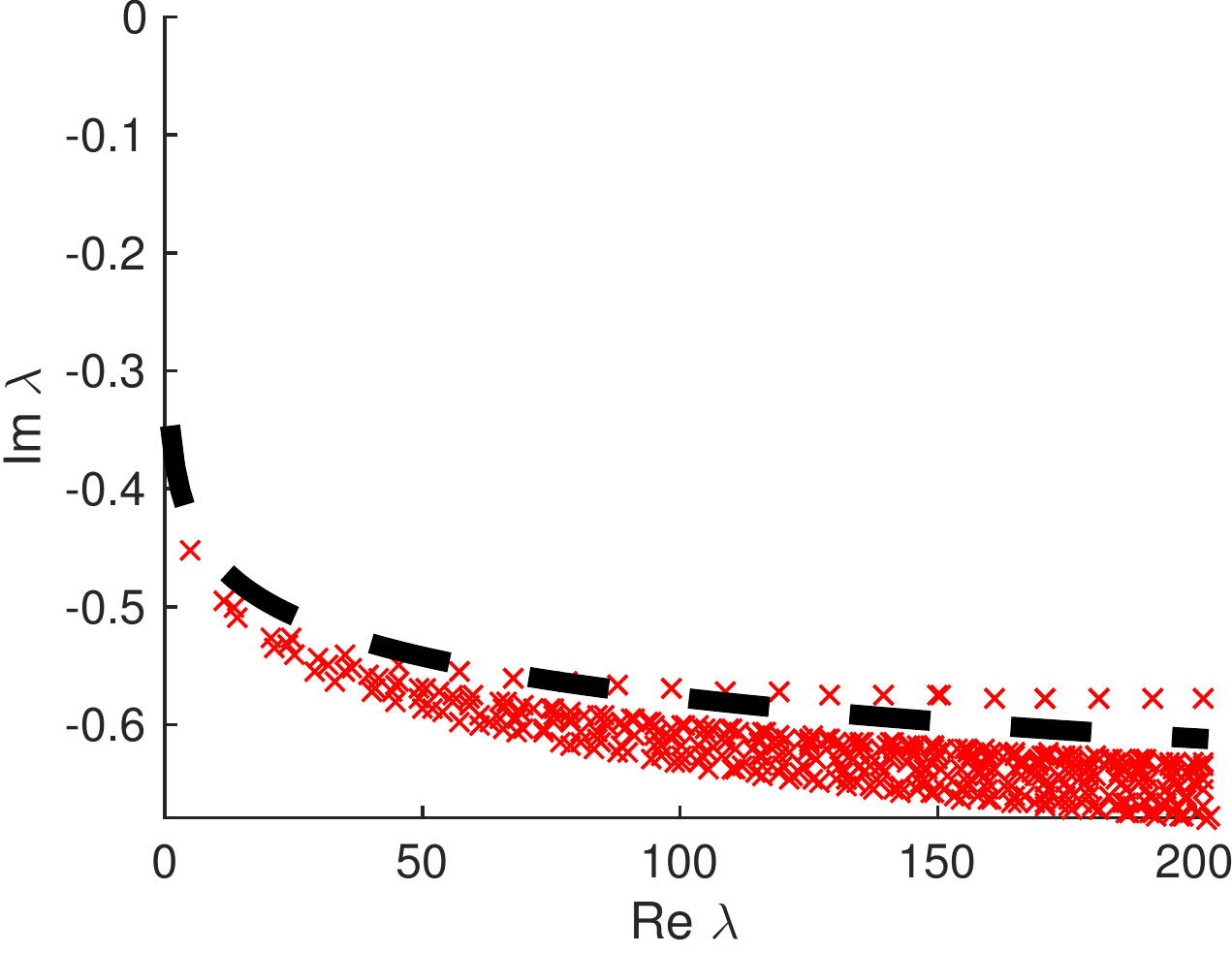}
\caption{$\alpha=0.9$}
\end{subfigure}
\begin{subfigure}[b]{.45\textwidth}
\includegraphics[width=\textwidth]{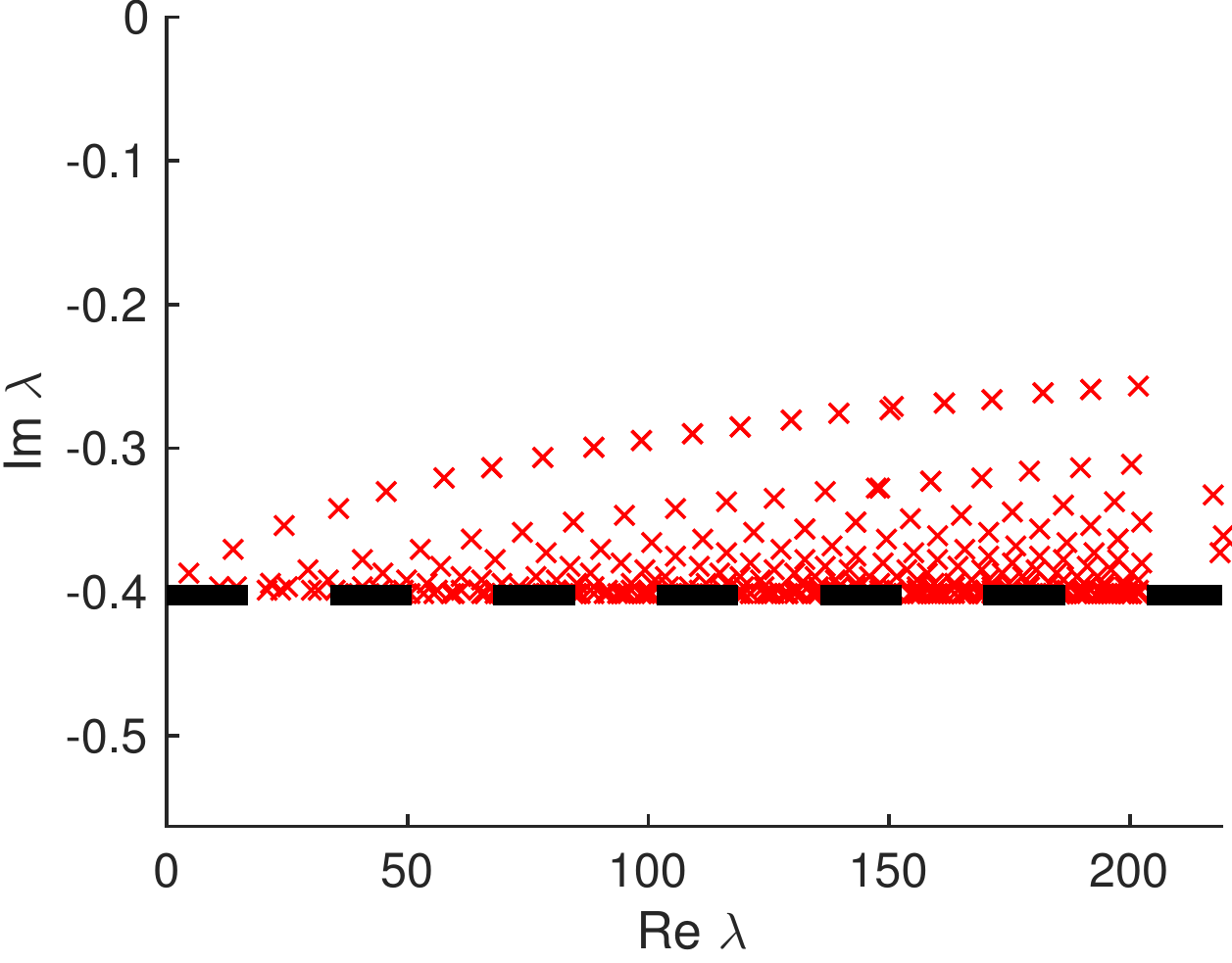}
\caption{$\alpha=1$}
\end{subfigure}
\caption[Numerical computation of resonances for the disk with highly frequency dependent potential small $\Re \lambda$]{We show resonances for the circle with $\Re \lambda\sim 10^2$, $V_0=1$ and several $\alpha$. The plots show $\Im \lambda $ v $\Re \lambda $ in each case. The dashed line shows the bound coming from non-glancing modes. It is difficult to see the transition at $\alpha=5/6$ from logarithmic resonance free regions to polynomial size resonance free regions because the change does not happen until $\Re \lambda \sim 10^6$ (see Figure \ref{fig:numRes2}).}
\label{fig:numericalResonanes}
\end{figure}

\begin{figure}
\centering
\begin{subfigure}[b]{.45\textwidth}
\includegraphics[width=\textwidth]{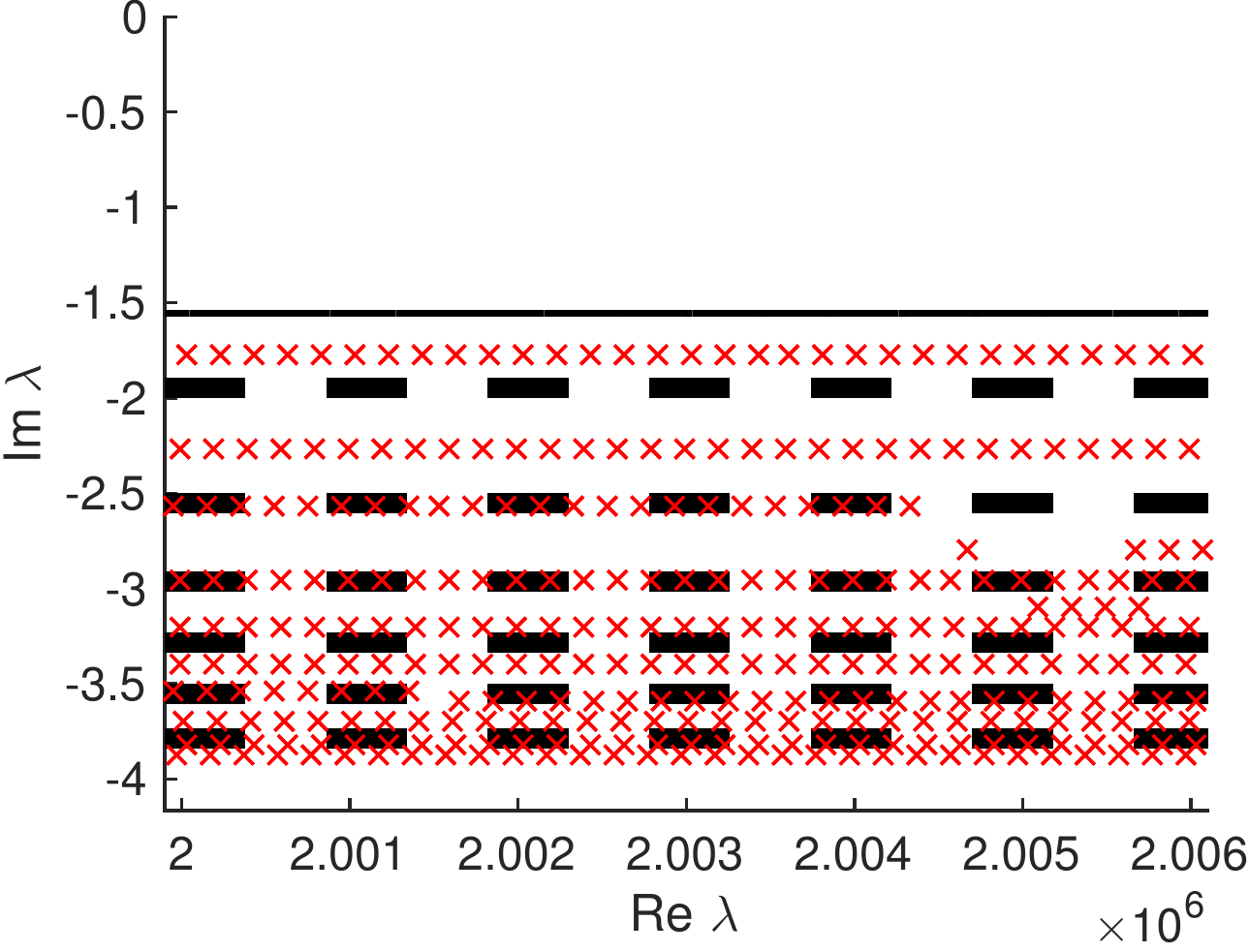}
\caption{$\alpha=\frac{5}{6}$}
\end{subfigure}
\begin{subfigure}[b]{.45\textwidth}
\includegraphics[width=\textwidth]{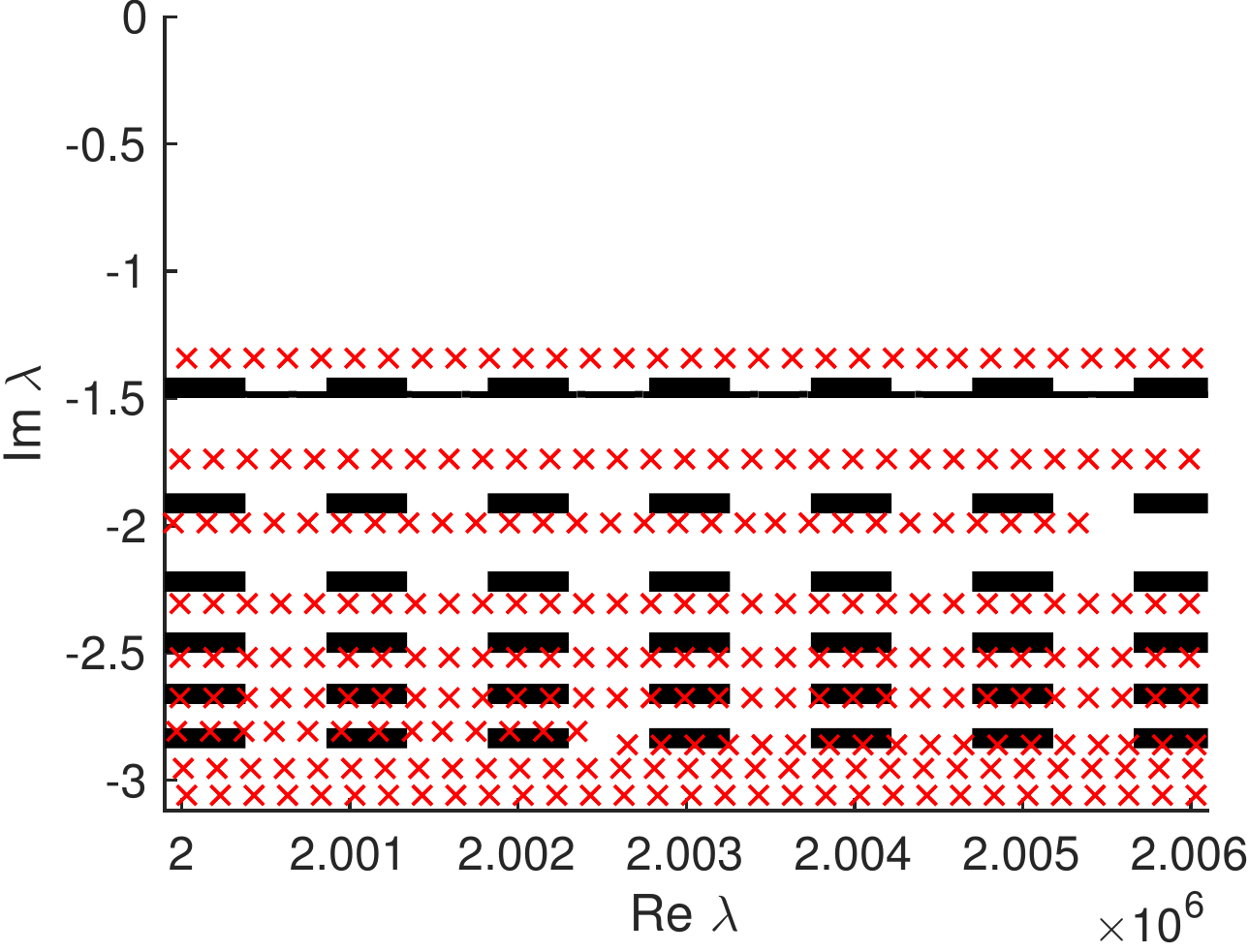}
\caption{$\alpha=0.8433$}
\end{subfigure}
\begin{subfigure}[b]{.45\textwidth}
\includegraphics[width=\textwidth]{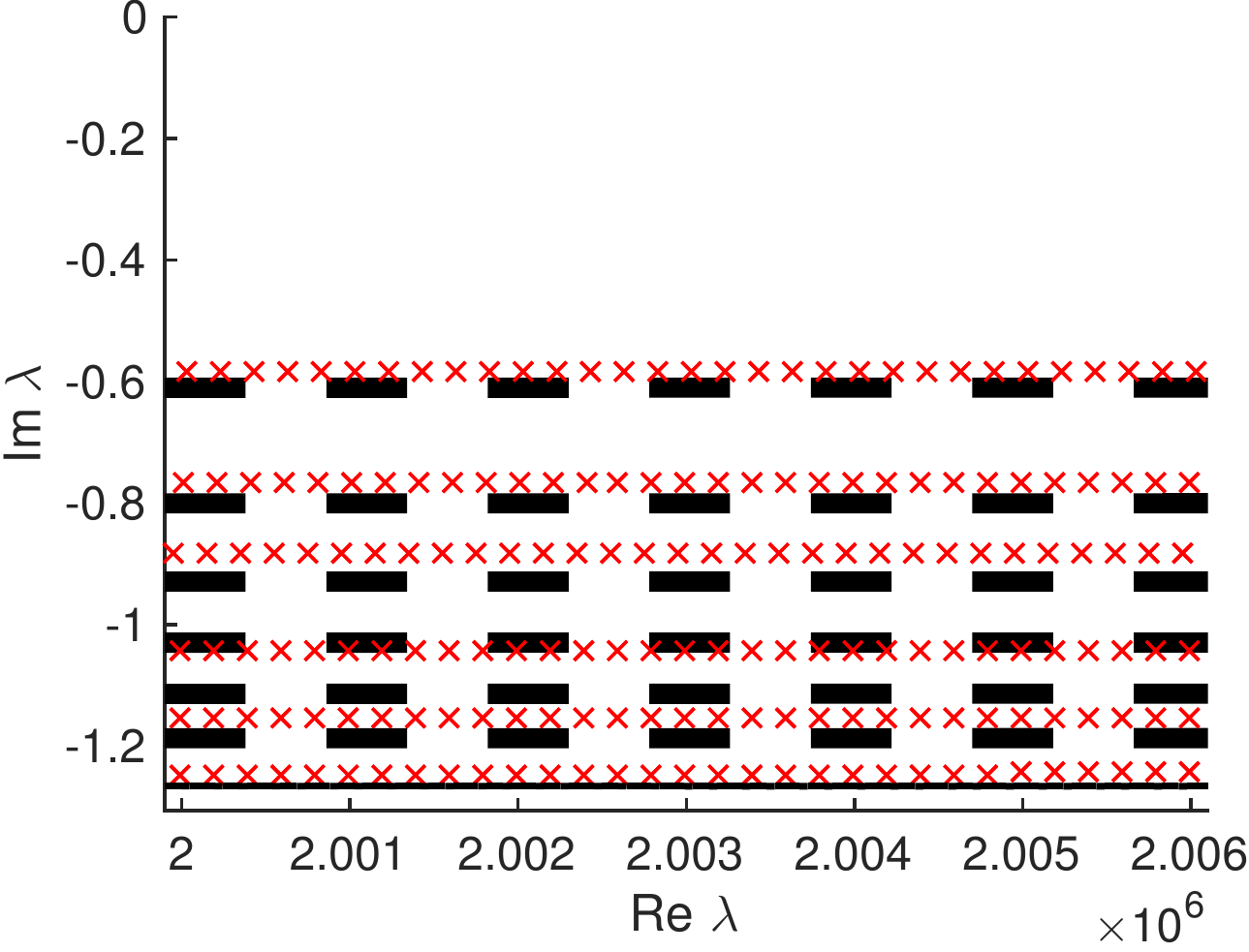}
\caption{$\alpha=0.8733$}
\end{subfigure}
\begin{subfigure}[b]{.45\textwidth}
\includegraphics[width=\textwidth]{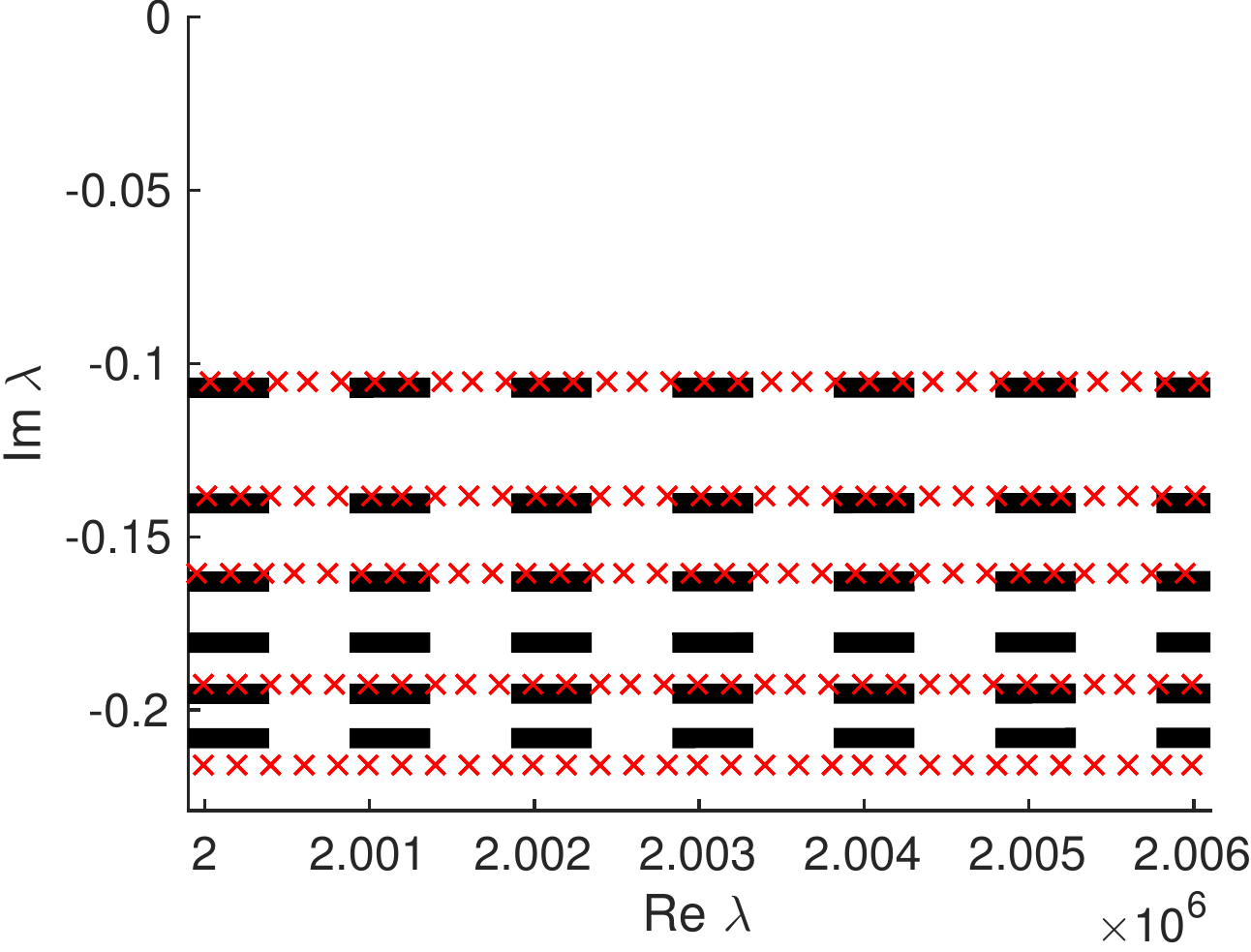}
\caption{$\alpha=0.9333$}
\end{subfigure}
\caption[Numerical computation of resonances for the disk with highly frequency dependent potential large $\Re \lambda$]{We show resonances for the circle with $\Re \lambda\sim 2\times 10^6$, $V_0=1$ and several $\alpha$. The plots show $\Im \lambda $ vs. $\Re \lambda $ in each case. The solid black line shows the (logarithmic) bound for resonances coming from non-glancing trajectories and the dashed lines show the first few (polynomial) bands of resonances from near glancing trajectories. Since the dashed red line is above the black lines at $\alpha=5/6$, it is necessary to go to still larger $\Re \lambda$ to see the transition. However, when $\alpha>5/6$, we start to see better agreement with the bands of resonances predicted in Theorem \ref{thm:resFreeCircle}. Note that the logarithmic line is off of the graph when $\alpha=0.9333$.}
\label{fig:numRes2}
\end{figure}

\begin{figure}
\centering
\includegraphics[width=.75\textwidth]{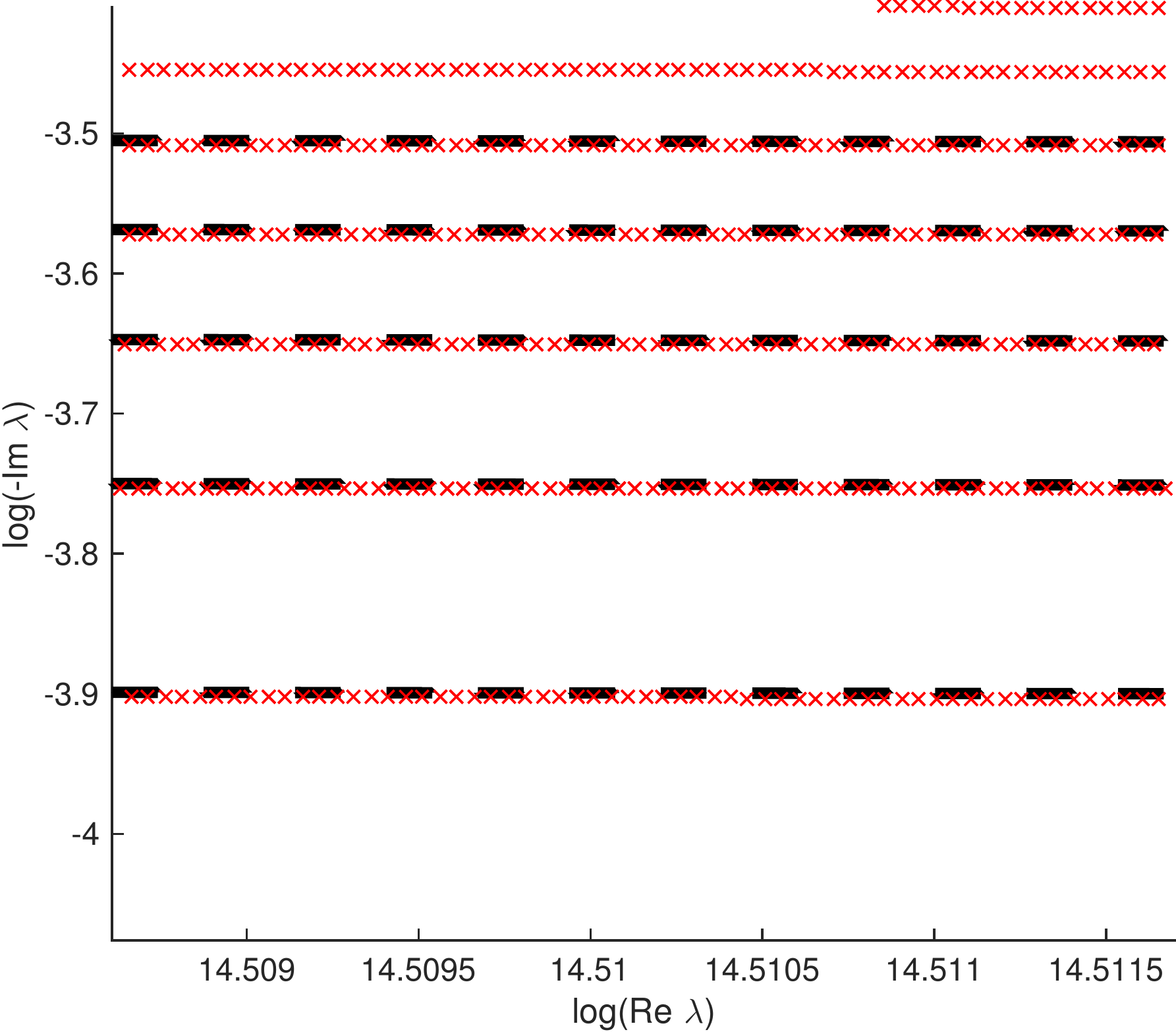}
\caption[Numerical computation of resonance bands for the disk (log-log plot)]{We show a plot of $\log(\Re \lambda)$ vs. $\log(-\Im \lambda)$ for $\Re \lambda \sim 2\times 10^6$ when $\alpha=1$ and $V_0\equiv 1$. The bands predicted by Theorem \ref{thm:resFreeCircle} are shown by the dashed lines. \label{fig:resBands}}
\end{figure}

The paper is organized as follows. In Section \ref{sec:preliminaries} we recall some asymptotics of Bessel and Airy functions, introduce the formal definition of $\Delta_{V,\Omega}$ and use the results of \cite{GS} to reduce the problem to finding solutions of a transcendental equation. In Section \ref{sec:resFreeCircle}, we demonstrate the existence of the various resonance free regions in Theorem \ref{thm:resFreeCircle}. Finally, in Section \ref{sec:existence}, we show the existence of the resonances in Theorem \ref{thm:existenceCircle} and prove Theorem \ref{thm:lowerBound}.

\noindent
{\sc Acknowledgements.} The author would like to thank Maciej Zworski for invaluable guidance and discussions.  The author is grateful to the National Science Foundation for support under the National Science Foundation Graduate Research Fellowship Grant No. DGE 1106400 and grant DMS-1201417 as well as under the Mathematical Sciences Postdoctoral Research Fellowship DMS-1502661.

\section{Preliminaries}
\label{sec:preliminaries}

\subsection{Asymptotics for Airy and Bessel Functions}
We collect here some properties of the Airy and Bessel functions that are used in the analysis of case case of the unit disk. These formulae can be found in, for example \cite[Chapter 9,10]{NIST}.

Recall that the Bessel of order $n$ functions are solutions to 
$$z^2y''+zy'+(z^2-n^2)y=0.$$
We consider the two independent solutions $H_n^{(1)}(z)$ and $J_n(z)$. The Wronskian of the two solutions is given by 
\begin{equation} 
\label{eqn:wrongskBessel}W(J_n,H_n^{(1)})=J_n{H_n^{(1)}}'(z)-J_n'H_n^{(1)}(z)=\frac{2i}{\pi z}\end{equation}  

We now record some asymptotic properties of Bessel functions. Consider $n$ fixed and $z\to \infty$ 
\begin{align}
J_n(z)&=\left(\frac{1}{2\pi z}\right)^{1/2}\left( e^{i(z-\frac{n}{2}\pi -\frac{1}{4}\pi)}+e^{-i(z-\frac{n}{2}\pi -\frac{1}{4}\pi )}+\O{}(|z|^{-1}e^{|\Im z|})\right)\nonumber\\
H_n^{(1)}(z)&=\left(\frac{2}{\pi z}\right)^{1/2}\left(e^{i(z-\frac{n}{2}\pi -\frac{1}{4}\pi )}+\O{}(|z|^{-1}e^{|\Im z|})\right)\nonumber\\
J_n(z)H_n(z)&=\frac{1}{\pi z}\left(e^{i(2z-n\pi -\frac{1}{2}\pi)}+1+\O{}(|z|^{-1}e^{2|\Im z|})\right)\label{eqn:normalAsymptotics}
\end{align}

Next, we record asymptotics that are uniform in $n$ and $z$ as $n\to \infty$. Let $\zeta=\zeta(z)$ be the unique smooth solution on $0<z<\infty$ to 
\begin{equation}\label{eqn:diffEqzeta}\left(\frac{d\zeta}{dz}\right)^2=\frac{1-z^2}{\zeta z^2}\end{equation} 
 with 
 \begin{gather*} 
 \lim_{z\to 0} \zeta= \infty,\quad\quad \lim_{z\to 1}\zeta= 0,\quad \quad \lim_{z\to \infty}\zeta=-\infty.
 \end{gather*}
  Then
 \begin{align}
 \frac{2}{3}(-\zeta)^{3/2}&=\sqrt{z^2-1}-\asec(z)&1<z<\infty \label{eqn:zetaAsymptotic}\\
 \frac{2}{3}(\zeta)^{3/2}&=\log \left(\frac{1+\sqrt{1-z^2}}{z}\right)-\sqrt{1-z^2}&0<z<1\nonumber\\
\frac{1-z^2}{\zeta z^2}&\to \sqrt[3]{2}&z\to 0\label{eqn:zetaLimit}
 \end{align}
 Let $$Ai(s)=\frac{1}{2\pi}\int _{-\infty }^{\infty }e^{i(\frac{1}{3}t^3+st)}dt$$ 
 for $s\in \re$
be the Airy function solving 
$Ai''(z)-zAi(z)=0.$
Then, $A_-(z)=Ai(e^{2\pi i/3}z)$ is another solution of the Airy equation. 

For $z$ fixed as $n\to \infty$
\begin{align}
J_n(nz)&= \left(\frac{4\zeta}{1-z^2}\right)^{1/4}\left(\frac{Ai(n^{2/3}\zeta)}{n^{1/3}}+\O{}(Ei(5/3,7/3))\right)\nonumber\\
H_n^{(1)}(nz)&=2e^{-\pi i/3}\left(\frac{4\zeta}{1-z^2}\right)^{1/4}\left(\frac{A_-(n^{2/3}\zeta)}{n^{1/3}}+\O{}(E_-(5/3,7/3))\right)\nonumber\\
J_n(nz)H_n^{(1)}(nz)&=2e^{-\pi i/3}\left(\frac{4\zeta}{1-z^2}\right)^{1/2}\left(\frac{Ai(n^{2/3}\zeta)A_-(n^{2/3}\zeta)}{n^{2/3}}+\O{}(Ei_-(\tfrac{8}{3},2,\tfrac{10}{3}))\right)\label{eqn:deltaMainAsymptotics}
\end{align}
where 
\begin{gather*} E_-(\alpha,\beta)=|A_-'(n^{2/3}\zeta)|n^{-\alpha}+|A_-(n^{2/3}\zeta)|n^{-\beta}\\
Ei(\alpha,\beta)=|Ai'(n^{2/3}\zeta)|n^{-\alpha}+|Ai(n^{-2/3}\zeta)|n^{-\beta}\\
Ei_-(\alpha,\beta,\gamma)=|AiA_-|(n^{2/3}\zeta)n^{-\alpha}+(|Ai'A_-|+|AiA_-'|)(n^{2/3}\zeta)n^{-\beta}+|Ai'A_-'(n^{2/3}\zeta)|n^{-\gamma}
\end{gather*}

We now record some facts about the Airy functions $Ai$ and $A_-$. The Wronskian of these two solutions is given by 
\begin{equation}W(A_i,A_-)=AiA_-'(z)-Ai'A_-(z)=\frac{e^{-\pi i/6}}{2\pi}.\label{eqn:WronskianAiry}\end{equation}
Furthermore, for $s\in \re$,
$Ai(s)=e^{-\pi i/3}A_-(s)+e^{\pi i/3}\overline{A_-(s)}$
and hence
\begin{equation} \label{eqn:ImAMinus} 
\Im (e^{-5\pi i/6}A_-(s))=-\frac{Ai(s)}{2}
\end{equation}

The zeros of $Ai(z)$ all lie on $(-\infty,0]$. We use the notation $-\zeta_k$  to denote the $k^{\text{th}}$ zero of $Ai$. 

Finally, we record asymptotics for Airy functions as $z\to \infty$ in the sector $|\Arg z|<\pi/3-\delta$. Many of these asymptotic formulae hold in larger regions, but we restrict our attention to this sector. Let $\eta=2/3z^{3/2}$ where we take the principal branch of the square root. Then
\begin{equation}
\label{eqn:airyAsymps}
\begin{gathered}
A_-(z)=\frac{e^{-\pi i/6}e^{\eta}}{2\sqrt{\pi}z^{1/4}}(1+\O{}(|z|^{-3/2})),\quad\quad \quad\quad A_-(-z)=\frac{e^{\pi i/12}e^{i\eta}}{2\sqrt{\pi }z^{1/4}}\\
Ai(z)=\frac{z^{-1/4}e^{-\eta}}{2\sqrt{\pi}}(1+\O{}(|z|^{-3/2})),\\ Ai(-z)=\frac{z^{-1/4}}{2\sqrt{\pi }}\left(e^{i\eta-i\pi /4}+e^{-i\eta +i\pi /4}+\O{}(|z|^{-3/2}e^{|\Im \eta|})\right) 
\end{gathered}
\end{equation}
\begin{align}
Ai(z)A_-(z)&=\frac{1}{4\pi z^{1/2}}(1+\O{}(|z|^{-3/2})) \label{eqn:AiryAsympPos},&
Ai(-z)A_-(-z)&=\frac{e^{\pi i/3}}{4\pi z^{1/2}}\left(1-ie^{2i\eta}+\O{}(|z|^{-3/2}e^{2|\Im\eta|})\right)
\end{align}

\subsection{Preliminaries on $-\Deltad{\pO}.$}
\label{sec:formalDefinition}
We define the operator $-\Deltad{\pO}$ using the symmetric, densely defined quadratic form 
$$
Q_{V,\pO}(u,w):=\la \nabla u,\nabla w\ra_{L^2(\re^d)} + \la V\gamma u,\gamma w\ra_{L^2(\pO)}\,
$$
with domain $H^1(\re^d)\subset L^2(\re^d)$. Here, $\gamma:H^s(\re^d)\to H^{s-1/2}(\partial\Omega)$ denotes the restriction map. In \cite{GS}, the authors show that $z\in \Lambda$ if and only if there is a nontrivial $z$-outgoing solution to 
\begin{equation} 
\label{eqn:outgoingSolution}
(-h^{2}\Deltad{\pO}-z^2)u=0
\end{equation}
where by $z$-outgoing we mean that there exists $R>0$ such that for $|x|>R$, 
$u=R_0(z/h)g$
for some compactly supported distribution $g$ where $R_0(\lambda)$ is the meromorphic continuation of the \emph{free resolvent}, 
$$R_0(\lambda):=(-\Delta- \lambda^2)^{-1}\,,\quad \quad \Im \lambda \gg1$$
to $\mathbb{C}$ if $d$ is odd and to the logarithmic cover of $\mathbb{C}\setminus\{0\}$ if $d$ is even.

Furthermore, the authors show that if $V:H^{1/2}(\partial\Omega)\to H^{1/2}(\partial\Omega)$ then solving \eqref{eqn:outgoingSolution} is equivalent to solving the following transmission problem
\begin{equation}
\label{eqn:main}
\begin{cases}(-h^2\Delta -z^2)u_1=0&\text{ in }\,\Omega\\
(-h^2\Delta -z^2)u_2=0 & \text{ in }\,\re^d\setminus \overline{\Omega}\\
u_1=u_2&\text{ on }\,\partial \Omega\\
\partial_\nu u_1+\partial_{\nu '}u_2+V\gamma u_1=0&\text{ on }\,\partial \Omega\\
u_2\;\,\text{is}\;\,z\text{-outgoing}
\end{cases}
\end{equation}
Here, $\partial_\nu$ and $\partial_{\nu'}$ are respectively the interior and exterior normal derivatives of $u$ at $\partial\Omega\,.$

\subsection{Reduction to Transcendental Equations on the Circle}

We now consider \eqref{eqn:main} with $\Omega=B(0,1)\subset \re^2$ and $V\equiv h^{-\alpha}  V_0$ on $\partial \Omega$. Then for $i=1,2$,
\begin{equation}
\label{eqn:circleMain}
\begin{cases} \left(-h^2\partial_r^2-\frac{h^2}{r}\partial_r-\frac{h^2}{r^2}\partial_{\theta}^2-z^2\right)u_{1}=0&\text{in } B(0,1)\\
\left(-h^2\partial_r^2-\frac{h^2}{r}\partial_r-\frac{h^2}{r^2}\partial_{\theta}^2-z^2\right)u_{2}=0&\text{in } \re^2\setminus B(0,1)\\
u_1(1,\theta)=u_2(1,\theta)\\
\partial_ru_1(1,\theta)-\partial_ru_2(1,\theta)+Vu_1(1,\theta)=0\\
u_2\text{ is }z \text{ outgoing}
\end{cases}.
\end{equation}
Expanding in Fourier series, write $u_{i}(r,\theta):=\sum_{n}u_{i,n}(r)e^{in\theta}.$
Then, $u_{i,n}$ solves
$$\left(-h^2\partial_r^2-h^2\recip{r}\partial_r+h^2\frac{n^2}{r^2}-z^2\right)u_{i,n}(r)=0.$$
Multiplying by $r^2$ and rescaling by $x=zh^{-1}r$, we see that $u_{i,n}(r)$ solves the Bessel equation with parameter $n$ in the $x$ variables. Then, using that $u_2$ is outgoing and $u_1\in L^2$, we obtain that $$u_{1,n}(r)=K_nJ_n(zh^{-1} r)\quad\text{ and }\quad  u_{2,n}(r)=C_nH_n^{(1)}(zh^{-1} r)$$
where $J_n$ is the $n^{\text{th}}$ Bessel function of the first kind, and $H_n^{(1)}$ is the $n^{\text{th}}$ Hankel function of the first kind.

To solve \eqref{eqn:circleMain} and hence find a resonance, we only need to find $z$ such that the boundary conditions hold. 
Using the boundary condition $u_1(1,\theta)=u_2(1,\theta)$, we have $K_nJ_n(zh^{-1})=C_nH_n^{(1)}(zh^{-1})$. Hence, 
$$C_n=\frac{K_nJ_n(zh^{-1})}{H_n^{(1)}(zh^{-1})}.$$ 
Next, we rewrite the second boundary condition in \eqref{eqn:circleMain} and use that $V\equiv h^{-\alpha}V_0$ to get
$$\sum_n(K_nzh^{-1} J_n'(zh^{-1})-C_nzh^{-1} H_n^{(1)'} (zh^{-1})+h^{-\alpha}V_0K_n J_n(zh^{-1}))e^{in\theta}=0.$$
Then, since $e^{in\theta}$ are $L^2$ orthogonal, we have 
$$K_n\left(zh^{-1} J_n'(zh^{-1})-zh^{-1}\frac{J_n(zh^{-1})}{H_n^{(1)}(zh^{-1})}H_n^{(1)'}(zh^{-1})+h^{-\alpha}V_0J_n(zh^{-1})\right)=0\,,\quad n\in \mathbb{Z}.$$
Thus
\begin{equation*}
\label{eqn:preWronskian}
K_nh^{-\alpha}V_0= K_nzh^{-1}\left(\frac{H_n^{(1)'}(zh^{-1})}{H_n^{(1)}(zh^{-1})}-\frac{J_n'(zh^{-1})}{J_n(zh^{-1})}\right).
\end{equation*}
which can be written
\begin{equation}
\label{eqn:diskResonances}
h^{-\alpha}V_0K_n=K_nzh^{-1}\frac{W(J_n,H_n^{(1)})}{J_n(zh^{-1})H_n^{(1)}(zh^{-1})}=\frac{2 iK_n}{\pi J_n(zh^{-1})H_n^{(1)}(zh^{-1})}
\end{equation}
where $W(f,g)$ is the Wronskian of $f$ and $g$.

Then, without loss, we assume $K_n=1$ or $K_n=0$. 
Hence, we seek solutions $z(h,n)$ to 
\begin{equation} 
\label{eqn:toSolveExact}1-\frac{\pi h^{-\alpha}V_0}{2i}J_n(h^{-1}z(h,n))H_n^{(1)}(h^{-1}z(h,n))=0.
\end{equation}

The quantity $nh^{-1}$ is the tangential frequency of the mode $u_{i,n}e^{in\theta}.$ In particular, the \emph{wave front set,} denoted $\WFh$ (see \cite[Chapter 4]{EZB}), of $e^{in\theta}$ has 
$$\WFh(e^{in\theta})\subset \{\xi'=nh\mod \o{}(1)\}.$$
 Thus, $|n|<(1-\e)h^{-1}$ corresponds to modes concentrating near directions transverse to the boundary,  $|n|\sim h^{-1}$ are the glancing frequencies, that is directions tangent to the boundary, and $|n|>(1+\e)h^{-1}$ corresponds to elliptic frequencies.

\section{Resonance Free Regions}
\label{sec:resFreeCircle}
In this section, we demonstrate the existence of resonance free regions. In particular, we prove Theorem \ref{thm:resFreeCircle}. We write $n=mh^{-1}$ and assume that 
$$|\Im z|\leq M_0\min(h\log h^{-1}, h^{2\alpha-2/3}).$$
\subsection{Analysis away from glancing ($m\leq 1-\e$ and $m\geq 1+\e$)}
Away from glancing, that is for $|h|n|-1|\geq ch^\delta$ with $0\leq \delta <1/2$, we apply \cite[Lemma 6.1]{Galk}. First, we compute the chord length of a trajectory starting in $\partial B(0,1)$ with a given slope. Let $\gamma(t)$ be a line with slope $r$ through $(0,-1)$. Then we find it's second intersection with $\partial B(0,1)$. 
$$\gamma(t):=(t,rt-1)$$
Then, $|\gamma(t)|=1$ implies 
$$(r^2+1)t^2-2rt=0.\quad \text{ Hence}\quad t=\frac{2r}{r^2+1}=2\frac{r}{\sqrt{1+r^2}}\frac{1}{\sqrt{1+r^2}}=\frac{2((1,r)\cdot (0,1))}{|(1,r)|^2}.$$

The unit tangent vector to $\gamma$ is given by $(1,r)/\sqrt{1+r^2}.$ Hence, the chord length is given by $2\xi_\nu$ where $\xi_\nu$ is the normal component of the unit tangent vector to $\gamma$. 

Using this in \cite[Lemma 6.1]{Galk}, we have that for $0\leq \delta <1/2$, there are no resonances contributed from the region $|m-1|\geq ch^\delta$ when
\begin{equation}\label{eqn:awayGlance}-\frac{\Im z}{h}\leq \inf_{|\xi'|<1-ch^{\delta}}\frac{1}{4\sqrt{1-|\xi'|^2}}\log \left(\frac{ h^{2-2\gamma}V_0}{4(1-|\xi'|^2)+h^{2-2\gamma}V_0}\right).\end{equation}

\subsection{Analysis for $1-\e h^\delta \\leq m\leq 1+\e h^\delta $}
\label{sec:modeldeltaD}
In this section, we consider the remaining values of $m$. First, we use \eqref{eqn:deltaMainAsymptotics} in \eqref{eqn:toSolveExact} to write
\begin{equation}
\label{eqn:AiryExact}
1-2\pi h^{-\alpha}V_0e^{-5\pi i/6}\left(\frac{\zeta}{1-(m^{-1}z)^2}\right)^{1/2}\left(\frac{Ai(n^{2/3}\zeta)A_-(n^{2/3}\zeta)}{n^{2/3}}+\O{}(Ei_-(\tfrac{8}{3},2,\tfrac{10}{3}))\right)=0
\end{equation}
where $\zeta=\zeta(m^{-1}z)$. 
We first ignore the error term in \eqref{eqn:AiryExact} and show that there are no solutions with the appropriate bounds on $\Im \zeta$. In particular, define $h_1:=n^{-1}$ so that $(1-\e h^{\delta})h\leq h_1\leq (1+\e h^{\delta})h$ and 
$$\W:=h_1^{2/3}h^{-\alpha }\left(\frac{\zeta}{1-( m^{-1}z)^2}\right)^{1/2}V_0=\O{}(h^{2/3-\alpha}).$$
The fact that $\W$ has uniform bounds for $\zeta$ in the relevant region comes from the fact that $h_1h^{-1}=m$ and $1-\e h^{\delta}<m<1+\e h^{\delta}$. 
Then, rewriting \eqref{eqn:AiryExact} without the lower order terms, we have
\begin{equation}
\label{eqn:AiryToSolve}
1-2\pi e^{-5\pi i/6}\W(\zeta)A_-(h_1^{-2/3}\zeta)Ai(h_1^{-2/3}\zeta)=0.
\end{equation}
Notice that if $\alpha \geq 2/3$ and $Mh^{2-2\alpha}\leq |\Re \zeta|\leq Ch^\delta$ or $\alpha<2/3$ and $|\Re \zeta|\leq Ch^\delta$ for any $\delta>0$, then the second term in \eqref{eqn:AiryExact} is bounded above by $1-\delta_1$ for some $\delta_1>0$. Hence, \eqref{eqn:AiryExact} has no solutions and we need only consider the remaining $\Re \zeta$.

\subsection{Analysis at glancing ($m\sim 1$)}
We next analyze $|\zeta|<M\max(h^{2/5(3-2\alpha)},h^{2/3}).$ Let $s=h^{-2/3}\Re \zeta$. then, $$0\leq |s|<M\max(h^{2/5(3-2\alpha)-2/3},1) $$ and 
$$\zeta=h^{2/3}s+\Im \zeta=h^{2/3}s+\O{}(\min(h\log h^{-1}, h^{2\alpha-2/3})).$$
Thus, 
\begin{multline*} 
|\W(\zeta)AiA_-(h_1^{-2/3}\zeta)-\W(h_1^{2/3}s)AiA_-(s) -\W(h_1^{2/3}s)(AiA_-)'(s)i\Im h_1^{-2/3}\zeta|\leq\\\O{}(h^{2/3}\la s\ra^{1/2}h^{-\alpha}(\Im h^{-2/3}\zeta)^2)+\O{}(h^{2/3}h^{-\alpha}\Im \zeta AiA_-(h^{-2/3}\zeta) )
\end{multline*}
We obtain lower bounds on 
$$f(s,h,h_1):=1-2\pi e^{-5\pi i/6}\W(h_1^{2/3}s)\left(A_-Ai(s)+(A_-Ai)'(s)i\Im h_1^{-2/3}\zeta\right).$$

Letting $\alpha:=e^{-5\pi i/6}$, we have by \eqref{eqn:ImAMinus} that
$$\alpha A_-(s)Ai(s)= \Re (\alpha A_-(s))Ai(s)-i\frac{Ai^2(s)}{2}$$
and 
\begin{align*} 
(\alpha A_-Ai)'(s)i\Im h_1^{-2/3}\zeta&= \left(Ai(s)Ai'(s) +\right.,\\
&\quad\left.i\left[Ai'(s)\Re (\alpha A_-(s)) +Ai(s)\Re (\alpha A_-'(s))\right]\right) \Im h_1^{-2/3} \zeta
\end{align*}

Thus, 
$$\Im f=- 2\pi \W(h_1^{2/3}s)\left( -\frac{Ai^2(s)}{2}+\left(Ai'(s)\Re(\alpha A_-(s))+Ai(s)\Re(\alpha A_-'(s))\right)\Im h_1^{-2/3}\zeta\right) $$
and 
$$\Re f=1-2\pi \W(h_1^{2/3}s)Ai(s)\left(\Re (\alpha A_-(s)) +Ai'(s)\Im h_1^{-2/3}\zeta\right).$$
So, when
$$|Ai(s)|\leq \frac{1-\delta}{2\pi \W(h_1^{2/3}s)\Re(\alpha A_-(s))},\quad \text{ or }\quad |Ai(s)|\geq \frac{1-\delta}{2\pi \W(h_1^{2/3}s)\Re(\alpha A_-(s))}$$
then $|f|\geq \delta$. Note that for $\alpha<2/3$, this condition is never satisfied.
Thus, we need only consider 
\begin{equation}\label{eqn:range} \frac{1-\delta}{2\pi \W(h_1^{2/3}s)\Re(\alpha A_-(s))}\leq |Ai(s)|\leq\frac{1-\delta}{2\pi \W(h_1^{2/3}s)\Re(\alpha A_-(s))}.\end{equation}
That is, using the fact that $|Ai'(-s)|\sim c|s|^{1/4}$ and $|A_-(-s)|\sim c|s|^{-1/4}$,
\begin{equation} \label{eqn:sLoc} s=-\zeta_k+\O{}( h^\alpha h^{-2/3}).
\end{equation}
where $-\zeta_k$ is the $k^{\text{th}}$ zero $Ai(s)$.

\begin{remark}
For $\alpha\leq 2/3$, notice that \eqref{eqn:sLoc} does not give us any additional information on the location of $s$. However, it is easy to see that in this situation $\Im f\geq Ch^{\alpha-2/3}.$ Since we need only consider small $\Re \zeta$ when $\alpha\geq 2/3$, this implies that in the relevant region $|\Im f|\geq c$ and hence there are no solutions to \eqref{eqn:toSolveExact} in this region.
\end{remark}

Now, $|\Im z|\leq M_0\min(h\log h^{-1}, h^{2\alpha-2/3})$ implies that $|\Im \zeta|\leq M_1h^{2\alpha-2/3}$. So, using the fact that $A_-(-s)=\O{}(|s|^{-1/4})$ and $Ai'(-s)=\O{}(|s|^{1/4})$ we see that there exists $K=K(M_1)$ such that if for some $\delta_1>0$ small enough,
\begin{equation} 
\label{eqn:resFreeSize} 
\inf_{k\leq K(M_1)}\left|\Im \zeta-\frac{ h_1^{2/3}}{8\pi ^2 \W(h_1^{2/3}(-\zeta_k))^2\Re(\alpha A_-(-\zeta_k))^3Ai'(-\zeta_k)}\right|\geq \delta_1 h^{2\alpha-2/3}
\end{equation}
then 
$$|\Im f|\geq c\delta_1 h^{\alpha-2/3}.$$

Finally, we account for the error terms.  We have suppressed terms of the form
$$\O{}(\max(1,h^{1/5(3-2\alpha)-1/3})\min(h^{4\alpha-8/3}, h^{2/3}(\log h^{-1})^2)+h^{\alpha-2/3}AiA_-(h^{-2/3}\zeta)).$$
Together with \eqref{eqn:range}
the estimate $|f|\geq c\delta_1 h^{\alpha+2/3}$ implies that there are no solutions to \eqref{eqn:toSolveExact} for $|\Re \zeta|<M\max(h^{2/5(3-2\alpha)},h^{2/3})$, $|\Im \zeta|<M_1\min(h^{2\alpha-2/3},h\log h^{-1})$, satisfying \eqref{eqn:resFreeSize}.

\subsection{Asymptotic analysis near glancing}

We need to analyze $C\e h^{\delta}\geq |\Re \zeta|\geq h^{2/5(3-2\alpha)}M.$ To do this, let $G_\Delta=\frac{ih_1^{1/3}}{2(-\zeta)^{1/2}}$, and $b:=2\pi e^{-5\pi i/6}.$
Then if $\zeta$ solves \eqref{eqn:AiryExact}
$$G_{\Delta}^{1/2}\W=-G_{\Delta}^{1/2}\W G_{\Delta}^{1/2}(-G_{\Delta}^{-1/2}bAi(h_1^{-2/3}\zeta)A_-(h_1^{-2/3}\zeta)G_{\Delta}^{-1/2}+\O{}(h^2))G_{\Delta}^{1/2}\W$$
and hence
\begin{multline*} (1+G_{\Delta}^{1/2}\W G_{\Delta}^{1/2})G_{\Delta}^{1/2}\W\\
=-G_{\Delta}^{1/2}\W G_{\Delta}^{1/2}(-G_{\Delta}^{-1/2}bAi(h_1^{-2/3}\zeta)A_-(h_1^{-2/3}\zeta)G_{\Delta}^{-1/2}-1+\O{}(h^2))G_{\Delta}^{1/2}\W.\end{multline*}

Using \eqref{eqn:AiryAsympPos} for $\Re \zeta<-Mh^{2/3}$, we have 
$$(1+G_{\Delta}^{1/2}\W G_{\Delta}^{1/2})G_{\Delta}^{1/2}\W=-G_{\Delta}^{1/2}\W G_{\Delta}^{1/2}(-ie^{\frac{4i}{3h_1}(-\zeta)^{3/2}}(1+\O{}(h\zeta^{-3/2})))G_{\Delta}^{1/2}\W.$$
$$G_\Delta^{1/2}\W=-(I+G_{\Delta}^{1/2}\W G_{\Delta}^{1/2})^{-1}G_{\Delta}^{1/2}\W G_{\Delta}^{1/2}(-ie^{\frac{4i}{3h_1}(-\zeta)^{3/2}}(1+\O{}(h\zeta^{-3/2})))G_{\Delta}^{1/2}\W.$$
For $\zeta>Mh^{2/3}$, we use \eqref{eqn:AiryAsympPos} to obtain
$$(I+G_{\Delta}^{1/2}\W G_{\Delta}^{1/2})G_{\Delta}^{1/2}\W=-G_{\Delta}^{1/2}\W G_{\Delta}^{1/2}(\O{}(h\zeta^{-3/2}))G_{\Delta}^{1/2}\W.$$
Hence,
$$G_{\Delta}^{1/2}\W=-(I+G_{\Delta}^{1/2}\W G_{\Delta}^{1/2})^{-1}G_{\Delta}^{1/2}\W G_{\Delta}^{1/2}\O{}(h\zeta^{-3/2})G_{\Delta}^{1/2}\W.$$

To see that $I+G_{\Delta}^{1/2}\W G_{\Delta}^{1/2}\neq 0$ observe that when $\Re \zeta<-Mh^{2/3}$, 
$$\left|\Re \frac{ih_1^{1/3} \W}{2(-\zeta)^{1/2}}\right|=h_1^{1-\alpha}\O{}\left(\frac{\Im \zeta}{|\Re \zeta|^{3/2}}\right)=\o{}(1)$$
and when $\Re \zeta>Mh^{2/3}$, 
$$\Re \frac{h_1^{1/3}\W}{2\zeta^{1/2}}\geq 0.$$

Now, since $|\Re \zeta|>Mh^{2/3}$, $\O{}(h\zeta^{-3/2})\ll 1$ for $M$ large. Hence, there are no zeros for $\Re \zeta >0$. For $\Re \zeta <0$, there are no zeros of \eqref{eqn:AiryToSolve} when
\begin{equation} 
\label{eqn:AbsVal}\left|(I+G_{\Delta}^{1/2}\W G_{\Delta}^{1/2})^{-1}G_{\Delta}^{1/2}\W G_{\Delta}^{1/2}(1+\O{}(h\zeta^{-3/2}))e^{\frac{4i}{3h_1}(-\zeta)^{3/2}}\right|<1.
\end{equation}

Let $\zeta=s+i\Im \zeta$. Then
$$(-\zeta)^{3/2}=(-s)^{3/2}(1-i\Im \zeta (-s)^{-1})^{3/2}=(-s)^{3/2}\left(1-\frac{3}{2}i\Im \zeta (-s)^{-1}+\O{}((\Im \zeta)^2s^{-2})\right)$$
and
$$(-\zeta)^{1/2}=(-s)^{1/2}(1+\O{}(\Im \zeta s^{-1})).$$
So, 
$$|e^{\frac{4i}{3h_1}(-\zeta)^{3/2}}|=e^{\frac{2\Im \zeta(-s)^{1/2}}{h_1 }+\O{}((\Im \zeta)^2|s|^{-1/2}h^{-1})}.$$
Taking logarithms of \eqref{eqn:AbsVal},
$$\frac{2\Im \zeta(-s)^{1/2}}{h_1}+\O{}((\Im \zeta)^2h^{-1}|s|^{-1/2})+\log\left|\frac{ h_1^{1/3}\W}{2i(-\zeta)^{1/2}-h_1^{1/3}\W}\right|+\O{}(h\zeta^{-3/2})<0.$$

Thus, for $-C\e h^{\delta}\leq\Re \zeta=s\leq -M\max(h^{2/5(3-2\alpha)}, h^{2/3})$, there are no solutions with
\begin{multline*} \Im \zeta<\inf_{-C\e h^{\delta}<s<-M\max(h^{2/5(3-2\alpha)},h^{2/3})}\frac{h_1}{4(-s)^{1/2}}\log \left|1+4(-s)h_1^{-2/3}\W^{-2}\right|\\+\O{}((\Im \zeta)^2|s|^{-1}+\Im \zeta |s|^{-1}h)+\O{}(h^2|s|^{-2}).\end{multline*}

\begin{tabular}{|c|c|c|}
\hline
&Main Term&Error\\
\hline
 $|s|< h^{2-2\alpha}$&$ h_1^{1/3}h^{2\alpha-4/3}(-s)^{1/2}$ &$|s|^{-1}\left(\begin{gathered} h^2|s|^{-1}+\\\min(h^2(\log h^{-1})^2,hh^{2\alpha-2/3}\end{gathered}\right)$\\
 \hline
$|s|\geq h^{2-2\alpha} $&$ \frac{h_1}{(-s)^{1/2}}\log (1-sh^{2\alpha-4/3}h_1^{-2/3})$&$h^2s^{-2}+\O{}(h^{4/3}(\log h^{-1})^2)$\\
\hline
\end{tabular}

Thus, since we have $|s|>M\max(h^{2/5(3-2\alpha)},h^{2/3})$, the error terms are lower order and hence 
$$ \Im \zeta<\inf_{-C\e h^{\delta}<s<-Mh^{2/5(3-2\alpha)}}\frac{h_1}{4(-s)^{1/2}}\log \left|1+4(-s)h_1^{-2/3}\W^{-2}\right|.$$

So, for $1-\e h^{\delta}\leq m\leq 1+\e h^{\delta}$, and $C\e h^{\delta}>|s|>Mh^{2/5(3-2\alpha)}$ there are no zeros of \eqref{eqn:toSolveExact} for 
\begin{equation}\label{eqn:resFreehyp} \Im \zeta\leq C\min(M^{1/2}h^{2\alpha-2/3}, Ch^{1-\delta/2}\log h^{-1}).\end{equation}
Taking $M$ large enough depending on $K$ and $h$ small enough, 
$$CM^{1/2}h^{2\alpha-2/3}\gg \sup_{k\leq K}\frac{ h_1^{2/3}}{8\pi ^2 \W(h_1^{2/3}(-\zeta_k))^2\Re(\alpha A_-(-\zeta_k))^3Ai'(-\zeta_k)}.$$
Moreover,
$$h^{1-\delta/2}\log h^{-1}\gg \frac{1-\alpha}{2}h\log h^{-1}.$$ Thus, the region $C\e h^{\delta}>|s|>M\max(h^{2/5(3-2\alpha)},h^{2/3})$ does not contribute to the resonances closest to the real axis.

Our last task is to relate the imaginary part of $z$ to that of $\zeta$ when $|\zeta|<\e h^{\delta}$. By \eqref{eqn:diffEqzeta} and \eqref{eqn:zetaLimit}, we have that 
\begin{equation}
\label{eqn:zZeta}z=h_1^{-1}h-h_1^{-1}h\frac{\zeta}{\sqrt[3]{2}}+\O{}(\zeta^2)\,,\quad \Im z=-h_1^{-1}h\frac{\Im \zeta}{\sqrt[3]{2}}+\O{}(\Re \zeta \Im \zeta).
\end{equation}
More generally $\Im z\sim C\Im \zeta+\O{}(|\Im \zeta|^2)$ for $|\zeta|<M$.
Since we assume $\Re z\in[1-Ch^{3/4},1+Ch^{3/4}]$ and we have $h_1=h+\O{}(h^{1+\delta})$,  \eqref{eqn:awayGlance}, \eqref{eqn:resFreeSize}, \eqref{eqn:resFreehyp}, \eqref{eqn:zZeta} 
and the fact that 
$$\lim_{w\to 1}\Phi^2(w)=(\sqrt[3]{2})^{-2}$$
complete the proof of the existence of resonance free regions of the sizes given in Theorem \ref{thm:resFreeCircle}.

\section{Construction of Resonances}
\label{sec:existence}
In this section, we demonstrate the existence of resonances. That is, we prove Theorem \ref{thm:existenceCircle}.  We first prove the following analog of Newton's method:
\begin{lemma}
\label{lem:Newton}
Suppose that $z_0\in\mathbb{C}$. Let 
$\Omega:=\{z\in\complex: |z-z_0|\leq \e\}$
and suppose $f:\Omega\to \mathbb{C}$ is analytic. Suppose that 
$$|f(z_0)|\leq a\,,\quad |\partial_zf(z_0)|\geq b\,,\quad \sup_{z\in \Omega}|\partial_z^2f(z)|\leq d.$$
Then if 
\begin{equation}
\label{eqn:condition}
a+d\e^2<\e b<c<1
\end{equation}
there is a unique solution $z$ to $f(z)=0$ in $\Omega$.
\end{lemma}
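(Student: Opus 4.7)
The plan is to reduce this to Rouché's theorem by comparing $f$ with its linearization at $z_0$. Let $g(z) := f(z_0) + \partial_z f(z_0)(z - z_0)$. First I would apply Taylor's theorem with integral remainder on the disk $\Omega$: since $f$ is analytic and $|\partial_z^2 f| \leq d$ on $\Omega$,
\begin{equation*}
f(z) - g(z) = \int_{z_0}^{z} \partial_z^2 f(\zeta)(z - \zeta)\,d\zeta, \qquad |f(z) - g(z)| \leq \tfrac{1}{2}d|z - z_0|^2,
\end{equation*}
so on the boundary circle $|z - z_0| = \e$ we get $|f - g| \leq \tfrac{1}{2}d\e^2$.

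Next I would bound $|g|$ from below on the same boundary using the reverse triangle inequality:
\begin{equation*}
|g(z)| \geq |\partial_z f(z_0)|\,|z - z_0| - |f(z_0)| \geq b\e - a.
\end{equation*}
The hypothesis $a + d\e^2 < \e b$ gives $\tfrac{1}{2}d\e^2 < b\e - a$ (in fact with room to spare), so $|f - g| < |g|$ on $\partial\Omega$. By Rouché's theorem, $f$ and $g$ have the same number of zeros in $\Omega$.

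Since $|\partial_z f(z_0)| \geq b > 0$, the affine function $g$ has a unique zero at $z_* = z_0 - f(z_0)/\partial_z f(z_0)$, and $|z_* - z_0| \leq a/b < \e$ (using $a < \e b$), so $z_*$ lies in the interior of $\Omega$. Therefore $f$ has exactly one zero in $\Omega$, proving existence and uniqueness.

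There is no real obstacle here; the only point requiring care is verifying the strict inequality $|f - g| < |g|$ on $\partial\Omega$ (needed for Rouché), which is why the statement bundles the slightly stronger hypothesis $a + d\e^2 < \e b$ rather than the weaker $a + \tfrac{1}{2}d\e^2 < \e b$. The auxiliary bound $\e b < c < 1$ is not needed for the proof itself; it will presumably be used in Section~\ref{sec:existence} to control error terms when the lemma is iterated or applied to concrete functions arising from \eqref{eqn:toSolveExact}.
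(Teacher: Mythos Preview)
Your argument via Rouch\'e's theorem is correct and complete. The paper, by contrast, proves the lemma by a contraction-mapping argument: it sets
\[
g(z):=z-\frac{f(z)}{\partial_z f(z_0)},
\]
shows that $|g'(z)|\le d\e/b$ on $\Omega$ and $|g(z)-z_0|\le a/b+d\e^2/b$, and then uses \eqref{eqn:condition} to conclude that $g$ maps $\Omega$ into itself with Lipschitz constant strictly less than $1$; the Banach fixed-point theorem yields the unique zero. This is precisely a frozen-derivative Newton iteration, which is why the lemma is billed as ``Newton's method,'' and it is also why the auxiliary bound $\e b<c<1$ appears in the hypothesis: the paper uses it (somewhat loosely) to name the contraction constant. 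Your Rouch\'e approach is slightly more economical---you correctly observe that only $a+\tfrac12 d\e^2<\e b$ is really needed and that $\e b<c<1$ plays no role in the proof itself---while the paper's iterative scheme has the advantage of being constructive and of making transparent the quantitative dependence of the zero on the data, which is what gets used downstream when the lemma is combined with the implicit function theorem to absorb the suppressed error terms in \eqref{eqn:deltaMainAsymptotics}.
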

\begin{proof}
Let 
$$g(z):=z-\frac{f(z)}{\partial_zf(z_0)}.$$
Then, 
$$|\partial_zg(z)|=\left|1-\frac{\partial_zf(z)}{\partial_zf(z_0)}\right|\leq \frac{d\e}{b}$$
and 
$$|g(z)-z_0|\leq |g(z_0)-z_0|+\sup_{\Omega}|\partial_zg(z)||z-z_0|\leq \frac{a}{b}+\frac{d\e^2}{b}.$$
Thus under the condition \eqref{eqn:condition}, $g:\Omega\to \Omega$ and 
$$|g(z)-g(z')|<\sup_{w\in\Omega}|\partial_zg(w)||z-z'|<c|z-z'|.$$
Hence, $g$ is a contraction mapping and by the contraction mapping theorem, there is a unique fixed point of $g$ in $\Omega$ and hence a zero of $f(z)$ in $\Omega$. 
\end{proof}

\subsection{Resonances at glancing}
We now analyze $n\sim h^{-1}$ which correspond to glancing trajectories. In particular, for $\alpha>2/3$, we construct solutions to \eqref{eqn:toSolveExact} for $0<h<h_0$ with 
$$\Im z\geq Ch^{2\alpha -2/3}.$$
Let $h_1=n^{-1}$. Then, suppressing terms of size $h_1^{2+2/3}h^{-\alpha}$, we seek solutions to \eqref{eqn:AiryToSolve}.
Our ansatz is 
$$\zeta= -h_1^{2/3}\zeta_k+\e(h)$$
where $-\zeta_k$ is the $k^{\text{th}}$ zero of $Ai(s).$ Then, 
\begin{multline*} \W(\zeta)A_-(h_1^{-2/3}\zeta)Ai(h_1^{-2/3}\zeta)=
\left(\W(-h_1^{2/3}\zeta_k)+\sum_{k\geq 1} \frac{\W^{(k)}(-h_1^{2/3}\zeta_k)}{k!}\e^k\right)\\\left(A_-Ai'(-\zeta_k)h_1^{-2/3}\e +A_-'Ai'(-\zeta_k)h_1^{-4/3}\e^2+\sum_{k\geq 3}\frac{(A_-Ai)^{(k)}(-\zeta_k)}{k!} h_1^{-2k/3}\e^k\right).
\end{multline*}

Let $\e=\e_0+\e_1$ where $\e_1=\o{}(\e_0)$. Then, ignoring terms terms of size $\e^2$ and letting $b:=2\pi e^{-5\pi i/6}$, we have
$$1-b\W(-h_1^{2/3}\zeta_k)A_-Ai'(-\zeta_k)h_1^{-2/3}\e_0=0.$$
That is,
$$ \e_0=\frac{h_1^{2/3}}{b\W(-h_1^{2/3}\zeta_k)A_-(-\zeta_k)Ai'(-\zeta_k)}=Ch_1^{2/3}h^{\alpha}h_1^{-2/3}$$
Then, using terms of size $\e_0^2$ and $\e_1$, we have 
\begin{multline*} \W(-h_1^{2/3}\zeta_k)A_-Ai'(-\zeta_k)h_1^{-2/3}\e_1\\+(\W(-h_1^{2/3}\zeta_k)A_-'Ai'(-\zeta_k)h_1^{-2/3}
+\W'(-h_1^{2/3}\zeta_k)A_-A'(-\zeta_k))h_1^{-2/3}\e_0^2=0.
\end{multline*}
That is, 
\begin{align*} 
\e_1&=-\frac{h_1^{2/3}(\W(-h_1^{2/3}\zeta_k)A_-'Ai'(-\zeta_k)h_1^{-4/3}+\W'(-h_1^{2/3}\zeta_k)A_-Ai'(-\zeta_k)h_1^{-2/3})\e_0^2}{\W(-h_1^{2/3}\zeta_k)A_-Ai'(
-\zeta_k)}\\
&=-h_1^{-2/3}\e_0^2\frac{A_-'(-\zeta_k)}{A_-(-\zeta_k)}(1+\O{}(\e_0h_1^{-2/3}))\\
&=-\frac{h_1^{2/3}A_-'(-\zeta_k)}{(\W(-h_1^{2/3}\zeta_k))^24\pi ^2e^{-5\pi i/3}A_-^3(-\zeta_k)(Ai'(-\zeta_k))^2}\left(1+\O{}(\e_0h_1^{-2/3})\right).
\end{align*}
So, since by \eqref{eqn:ImAMinus} 
$$\Im (e^{-5\pi i/6}A_-(s))=-\frac{Ai(s)}{2}.$$
\begin{align*}
\Im \e_1&=-\frac{h_1^{2/3}\Im(e^{-5\pi i/6}A_-'(-\zeta_k))}{(\W(-h_1^{2/3}\zeta_k)))^24\pi ^2(e^{-5\pi i/6})^3A_-^3(-\zeta_k)(Ai'(-\zeta_k))^2}\left(1+\O{}(\e_0h_1^{-2/3})\right)\\
&=\frac{h_1^{2/3}}{(\W(-h_1^{2/3}\zeta_k))^28\pi ^2(e^{-5\pi i/6})^3A_-^3(-\zeta_k)Ai'(-\zeta_k)}\left(1+\O{}(\e_0h_1^{-2/3})\right)\\
\end{align*}
 Since $b \Phi(-h_1^{2/3}\zeta_k)A_-Ai'(-\zeta_k)\neq 0$, repeating in this way we obtain an asymptotic expansion for $\e(h)$ in powers of $h^{\alpha}h_1^{-2/3}$ such that for $\zeta=-h_1^{2/3}\zeta_k+\e(h)$, 
$$1-b\W(\zeta)A_-(h_1^{-2/3}\zeta)Ai(h_1^{-2/3}\zeta)=\O{}(h_1^\infty).$$

Let 
$$f(\zeta)=1-b\W(\zeta)A_-(h_1^{-2/3}\zeta)Ai(h_1^{-2/3}\zeta).$$
Then, for $\zeta=-h_1^{2/3}\zeta_k+\O{}(h_1^{\alpha})$, 
$$|f'(\zeta)|\geq ch^{-\alpha}$$
and
$$|f''(\zeta)|\leq Ch^{-\alpha}h_1^{-2/3}.$$
Thus, letting $n=h^{-1}+\O{}(1)$ and using Lemma \ref{lem:Newton}, there is a solution $\zeta_0(h_1,h)$ to $f(\zeta_0(h_1,h))=0$ with $$\zeta_0=-h_1^{2/3}\zeta_k+\e(h)+\O{}(h^\infty).$$

Now, by the implicit function theorem  (or Rouche's theorem)
$f(\zeta)=a(\zeta)$ defines $\zeta$ in a neighborhood of $\zeta_0$ for $a$ small enough. Hence, since we suppressed terms of size $h_1^{8/3-\alpha}$ in \eqref{eqn:deltaMainAsymptotics}, we have that there is a resonance with 
$$\zeta=\zeta_0+\frac{\O{}(h_1^{8/3}h^{-\alpha})}{\partial_\zeta f(\zeta_0)}=\zeta_0+\O{}(h_1^{8/3}).$$

\subsection{Resonances normal to the boundary}
Next, we consider $n$ fixed relative to $h$. That is, we consider modes that concentrate normal to $\partial B(0,1)$. 

Using asymptotics \eqref{eqn:normalAsymptotics} in \eqref{eqn:toSolveExact}, we have
\begin{equation}
\label{eqn:asympToSolve}
1-\frac{h^{1-\alpha}V_0}{2i z(h,n)}\left(e^{2iz(h,n)/h-(n+\tfrac{1}{2})\pi i}(1+\O{}(hz(h,n)^{-1}))+1\right)=0.
\end{equation}
Let 
$$F(\e,k,n,h)=1-\frac{2h^{1-\alpha}V_0}{i\pi h(4k+2n+1)}\left(e^{2i\e/h}+1\right).$$

Then, 
$$\e_0(k,n,h)=\frac{-ih}{2}\log\left[h^{\alpha-1}\frac{i\pi h(4k+2n+1)}{2V_0}-1\right]$$
has 
$$F(\e_0(k,n,h),k,n,h)=0,\quad |\partial_\e F(\e_0(k,n,h),k,n,h)|\geq ch^{-1}.$$

Now, for $0<c$ and $ch^{-1}<k<Ch^{-1}$ by \eqref{eqn:asympToSolve}, $z(h,k,n)$ can be defined by a solution
$z(h,k,n)=\frac{\pi h}{4}(4k+2n+1)+\e(k,n,h)$ where 
$$F(\e,k,n,h)=\O{}(e^{2i\e/h}h/z+\e).$$
So, by the implicit function theorem there is a solution $\e$ satisfying
\begin{align*} 
\e(k,n,h)&=\e_0(k,n,h)+(\partial_\e F(\e_0(k,n,h),k,n,h))^{-1}\O{}(h^{1-\alpha}e^{2i\e_0/h}(h/z+\e_0))\\
&=\e_0(k,n,h)+\O{}(h^{2}).
\end{align*}

Thus, for all $\e>0$ and $0<h<h_\e$, there exist $z(h)\in \Lambda$ with
\begin{equation}\label{eqn:normal}
\frac{\Im z}{h}\sim\begin{cases} -\frac{(1-\alpha)}{2}\log h^{-1}+\frac{1}{2}\log\left(\frac{2}{V_0}\right)+\O{}(h^{3/4})&\alpha<1\\
-\frac{1}{4}\log \left(1+\frac{4}{V_0^2}\right)+\O{}(h^{3/4})&\alpha=1\end{cases}
\end{equation}
\begin{remark} Note that the size of the error terms in \eqref{eqn:normal} comes from the fact that we allow $\Re z \in [1-Ch^{3/4},1+Ch^{3/4}]$. 
\end{remark}

\noindent This completes the proof of Theorem \ref{thm:existenceCircle}.

\subsection{Resonances Away from Glancing}
Finally, we construct resonances coming from modes concentrating farther away from glancing but not normal to the boundary. In particular, we show the existence of modes concentrating $h^{2/3-2\e/3}$ of glancing for $(3\alpha-2)/4<\e \leq 1.$ This will prove Theorem \ref{thm:lowerBound}.

To do this, let $w=(nh)^{-1}z$ and $\zeta=\zeta(w)$. Then we first suppress the lower order terms in \eqref{eqn:deltaMainAsymptotics} and solve \eqref{eqn:AiryToSolve}. Using the asymptotics \eqref{eqn:AiryAsympPos}, in \eqref{eqn:AiryToSolve} and letting $n=h_1^{-1}$ we have 
\begin{multline}\label{eqn:toMultiply} 1- \frac{h_1^{1/3}\W i}{2(-\zeta)^{1/2}}\left(1+\sum_{j=1}^{N-1}\frac{c_kh_1^{k}}{(-\zeta)^{3k/2}}-ie^{\frac{4}{3h_1}i(-\zeta)^{3/2}}\left(1+\sum_{k=1}^{N-1}\frac{b_kh_1^{k}}{(-\zeta)^{3k/2}}\right)\right)\\
+\O{}\left(h_1^{N+1}h^{-\alpha}(-\zeta)^{-(3N+1)/2}(1+e^{\frac{4}{3h_1}i(-\zeta)^{3/2}})\right)=0
\end{multline}
where $c_k$ and $b_k$ are real.

We make the ansatz
\begin{equation}\label{eqn:ansatz} 
(-\zeta)^{3/2}=\frac{3}{8}\pi h_1(4k-1)+\e=:m+\e
\end{equation}
where we assume $\e=\O{}(mh^{\delta})$ for some $\delta>0$.
Then,
\begin{equation} \label{eqn:zoneHalf}(-\zeta)^{1/2}=m^{1/3}\left(1+\frac{1}{3m}\e+\O{}(\e^2/m^2)\right)\,, \quad (-\zeta)=m^{2/3}\left(1+\frac{2}{3m}\e +\O{}(\e^2/m^2)\right).\end{equation}
and $ie^{\frac{4}{3h_1}i(-\zeta)^{3/2}}=e^{\frac{4}{3h_1}\e}.$
Multiplying \eqref{eqn:toMultiply} by $(-\zeta)^{1/2}$ and using 
$$\W(\zeta)=\sum_{n=0}^{N-1}\frac{\W^{(n)}(m)\e^n}{n!}+\O{}(h_1^{2/3}h^{-\alpha}\e^N),$$
we have
\begin{multline}
\label{eqn:approxSolve}
(-\zeta)^{1/2}-\frac{h_1^{1/3}\W(m)i}{2}\left(1+\sum_{k=1}^{N-1}\frac{c_kh_1^{k}}{(-\zeta)^{3k/2}}-ie^{\frac{4}{3h_1}i(-\zeta)^{3/2}}\left(1+\sum_{k=1}^{N-1}\frac{b_kh_1^{k}}{(-\zeta)^{3k/2}}\right)\right)\\
+\O{}\left(h_1^{1}h^{-\alpha}(h_1^{N}m^{-N}+\e)(1+e^{\frac{4}{3h_1}i\e}\right)=0.\end{multline}

Then, let $\e(h)=\e_0+\e_1$ where $\e_1=\O{}(\e_0h^{\delta})$ for some $\delta>0$. Using terms which do not involve $\e$ and the exponential term,
$$\e_0=-\frac{3h_1i}{4}\left[\log \left(\frac{2m^{1/3}i}{h_1^{1/3}\W(m)}+1+\sum_ {k=1}^{N-1}c_kh^{k}m^{-k}\right)-\log\left(1+\sum_{k=1}^{N-1}b_kh^km^{-k}\right)\right].$$
Now, using 
$$e^{\frac{4i}{3h_1}(\e_0+\e_1)}=e^{\frac{4i}{3h_1}\e_0}\left(1+\frac{4i}{3h_1}\e_1+\O{}(\e_1^2h^{-2})\right).$$
we can solve for an asymptotic expansion for $\e(h)$ in powers of $h_1m^{-1}$ so that for $(-\zeta_0)^{3/2}=m+\e(h)$,
$$(-\zeta_0)^{1/2}- \frac{h^{1/3}\W(\zeta_0)i}{2}\left(1+\sum_{j=1}^{N-1}\frac{c_kh^{k}}{(-\zeta_0)^{3k/2}}-ie^{\frac{4}{3h}i(-\zeta_0)^{3/2}}\left(1+\sum_{k=1}^{N-1}\frac{b_kh^{k}}{(-\zeta_0)^{3k/2}}\right)\right)=\O{}(h^\infty).$$

Then, since 
$$f(\eta)=\eta- \frac{h^{1/3}\W(\eta^2)i}{2}\left(1+\sum_{j=1}^{N-1}\frac{-\zeta_kh^{k}}{\eta^{3k}}-ie^{\frac{4}{3h}i\eta^{3}}\left(1+\sum_{k=1}^{N-1}\frac{b_kh^{k}}{\eta^{3k}}\right)\right)$$
has
\begin{equation}\label{eqn:derBound}|f'(\eta)|\geq c|\zeta_0|h^{-\alpha}\left(1+|\zeta_0|^{1/2}h^{\alpha-1}\right)\,,\quad |f''(\eta)|\leq c|\zeta_0|^2h^{-\alpha-1}\left(1+|\zeta_0|^{1/2}h^{\alpha-1}\right)
\end{equation}
when 
$$|\eta-\zeta_0^{1/2}|\leq Ch.$$
Hence, Lemma \ref{lem:Newton} implies the existence of a solution to $f(\eta)=0$ that is $\O{}(h^\infty)$ close to $(-\zeta_0)^{1/2}.$ Next, by the implicit function theorem, 
$f(\eta)=a(\eta)$ defines $\eta$ as a function of $a$ for $a$ sufficiently small. Thus, since 
$$\O{}\left(h_1h^{-\alpha}(h_1^{N}m^{-N}+\e^N)(1+m^{1/2}h^{\alpha-1}))+h_1^2h^{-\alpha}m^{2/3}\right)=\O{}(h_1^2h^{-\alpha}m^{2/3})$$
there exists a solution, $z(k,h,n)$, to \eqref{eqn:toSolveExact}
with 
$$(-\zeta)^{1/2}=(-\zeta_0)^{1/2}+\frac{a((-\zeta_0)^{1/2},h)}{\partial_\eta f((-\zeta_0)^{1/2})}=(-\zeta_0)^{1/2}+\O{}(h_1^2(1+m^{1/3}h^{\alpha-1})^{-1})=(-\zeta_0)^{1/2}+\O{}(h_1^2).$$
Thus,
$$\zeta=\zeta_0+\O{}((-\zeta_0)^{1/2}h_1^2).$$

This shows that if $m\geq ch^{1-\delta}$, we can solve for $\zeta$ so that 
$$\zeta=\zeta_0+\O{}(h^2)$$
by choosing $N$ large enough.
Now, 
$$\Im (-\zeta_0)=-\frac{3h_1}{8m^{1/3}}\log\left(\frac{4m^{2/3}}{h_1^{2/3}\W^2(m)}+1\right)+\O{}(\e_0h_1m^{-4/3})$$
Hence, we have constructed resonances with 
$$\Im \zeta_1=\frac{3h_1}{8m^{1/3}}\log\left(\frac{4m^{2/3}}{h_1^{2/3}\W^2(m)}+1\right)+\O{}(\e_0h_1m^{-4/3}+h_1^2)$$
Because of the size of the lower order terms above, this construction only gives accurate estimates on $\Im(-\zeta_0)$ when $\delta>(3\alpha-2)/4$.

Thus, for $\delta\geq 0$, there exist resonances coming from modes concentrating  $h^{2/3(1-\delta)}$ close to glancing with 
$$\Im z\sim\begin{cases}Ch^{2\alpha-2/3-\delta/3}& (3\alpha-2)/4< \delta <3\alpha-2\\
Ch&\delta=3\alpha-2\\
Ch^{2/3+\delta/3}\log h^{-1}&3\alpha-2<\delta\leq 1
\end{cases}.$$
Moreover, for each $n$ with $(1-\e)h^{-1}\leq |n|\leq (1+\e)h^{-1}$, we have $(1-Ch^{3/4})nh^{-1}\leq \Re w\leq nh^{-1}(1+Ch^{3/4}).$ Hence, $\Re \zeta$ ranges over an interval of size $Ch^{3/4}$. Together with the construction above, this implies that for each such $n$ we have at least $ch^{-1/4}$ resonances a fixed distance from glancing. Thus, for $M$ large enough
$$\#\{z\in \Lambda(h)\}\geq Ch^{-5/4}.$$
This implies Theorem \ref{thm:lowerBound}.

\bibliography{biblio}
\bibliographystyle{abbrv} 
\end{document}